\newtheorem{theorem}{Theorem}[section]
\newtheorem{lemma}[theorem]{Lemma}
\newtheorem{assumption}[theorem]{Assumption}
\newtheorem{definition}{Definition}
\newtheorem{corollary}[theorem]{Corollary}
\newtheorem{proposition}[theorem]{Proposition}
\newtheorem{fact}[theorem]{Fact}
\newtheorem{remark}{Remark}
\newcounter{ass}[section]
\renewcommand*{\theass}{\thesection.\arabic{ass}}
\newcommand{\PQ}{\textsf{PQ} }
\newcommand{\PV}{\textsf{PV} }
\renewcommand{\j}{\boldsymbol{\mathrm{j}}}
\newenvironment{pfof}[1]{\vspace{1ex}\noindent{\itshape Proof of
    #1:}\hspace{0.5em}} {\hfill\oprocend\vspace{1ex}}
\newenvironment{proof}[1]{\vspace{1ex}\noindent{\itshape Proof:}\hspace{0.5em}} {\hfill\oprocend\vspace{1ex}}
\newcommand{\until}[1]{\{1,\dots, #1\}}
\newcommand{\setdef}[2]{\{#1 \; | \; #2\}}
\newcommand{\map}[3]{#1: #2 \rightarrow #3}
\newcommand{\real}{\mathbb{R}}
\newcommand\oprocendsymbol{\hbox{$\square$}}
\newcommand\oprocend{\relax\ifmmode\else\unskip\hfill\fi\oprocendsymbol}
\newcommand{\vect}[1]{\mathbbold{#1}}
\newcommand{\vones}[1][]{\vect{1}_{#1}}
\newcommand{\vzeros}[1][]{\vect{0}_{#1}}
\DeclareSymbolFont{bbold}{U}{bbold}{m}{n}
\DeclareSymbolFontAlphabet{\mathbbold}{bbold}
\newcommand{\tb}{\color{black}}
\renewcommand{\circle}{\mathbb{S}^1}	
\newcommand{\torus}{\mathbb{T}}
\newcommand{\bsin}{\boldsymbol{\sin}}
\newcommand{\barcsin}{\boldsymbol{\arcsin}}
\newcommand{\bcos}{\boldsymbol{\cos}}
\newcommand{\Ell}{\mathcal{E}^{\ell\ell}}
\newcommand{\Egl}{\mathcal{E}^{g\ell}}
\newcommand{\Egg}{\mathcal{E}^{gg}}
\begin{document}
%
% paper title
% can use linebreaks \\ within to get better formatting as desired
% Do not put math or special symbols in the title.
%\title{Sharp Sufficient Conditions for Solvability of Lossless Power Flow Equations}
\title{A Theory of Solvability for Lossless Power Flow Equations -- Part I: Fixed-Point Power Flow}
%\title{Sufficient Parametric Conditions for Solvability of Lossless Power Flow Equations\\
%{\LARGE Part I: Reformulation of Network Equations}}
%
%
% author names and IEEE memberships
% note positions of commas and nonbreaking spaces ( ~ ) LaTeX will not break
% a structure at a ~ so this keeps an author's name from being broken across
% two lines.
% use \thanks{} to gain access to the first footnote area
% a separate \thanks must be used for each paragraph as LaTeX2e's \thanks
% was not built to handle multiple paragraphs
%

\author{John~W.~Simpson-Porco,~\IEEEmembership{Member,~IEEE}\\
\thanks{J.~W.~Simpson-Porco is with the Department of Electrical and Computer Engineering, University of Waterloo. Email: {\footnotesize jwsimpson@uwaterloo.ca}.}
%%
%%
%\thanks{This work was supported by University of Waterloo start-up funding.}
% <-this % stops a space
}

% note the % following the last \IEEEmembership and also \thanks - 
% these prevent an unwanted space from occurring between the last author name
% and the end of the author line. i.e., if you had this:
% 
% \author{....lastname \thanks{...} \thanks{...} }
%                     ^------------^------------^----Do not want these spaces!
%
% a space would be appended to the last name and could cause every name on that
% line to be shifted left slightly. This is one of those "LaTeX things". For
% instance, "\textbf{A} \textbf{B}" will typeset as "A B" not "AB". To get
% "AB" then you have to do: "\textbf{A}\textbf{B}"
% \thanks is no different in this regard, so shield the last } of each \thanks
% that ends a line with a % and do not let a space in before the next \thanks.
% Spaces after \IEEEmembership other than the last one are OK (and needed) as
% you are supposed to have spaces between the names. For what it is worth,
% this is a minor point as most people would not even notice if the said evil
% space somehow managed to creep in.

% The paper headers
\markboth{IEEE Transactions on Control of Network Systems. This version: \today}%
{IEEE Transactions on Control of Network Systems. This version: \today}
% The only time the second header will appear is for the odd numbered pages
% after the title page when using the twoside option.
% 
% *** Note that you probably will NOT want to include the author's ***
% *** name in the headers of peer review papers.                   ***
% You can use \ifCLASSOPTIONpeerreview for conditional compilation here if
% you desire.

% If you want to put a publisher's ID mark on the page you can do it like
% this:
%\IEEEpubid{0000--0000/00\$00.00~\copyright~2012 IEEE}
% Remember, if you use this you must call \IEEEpubidadjcol in the second
% column for its text to clear the IEEEpubid mark.

% use for special paper notices
%\IEEEspecialpapernotice{(Invited Paper)}

% make the title area
\maketitle

% As a general rule, do not put math, special symbols or citations
% in the abstract or keywords.

\vspace{-2em}

\begin{abstract}
This two-part paper details a theory of solvability for the power flow equations in lossless power networks. 
In Part I, we derive a new formulation of the lossless power flow equations, which we term the fixed-point power flow. The model is stated for both meshed and radial networks, and is parameterized by several graph-theoretic matrices -- the power network stiffness matrices -- which quantify the internal coupling strength of the network. The model leads immediately to an explicit approximation of the high-voltage power flow solution. For standard test cases, we find that iterates of the fixed-point power flow converge rapidly to the high-voltage power flow solution, with the approximate solution yielding accurate predictions near base case loading.
In Part II, we leverage the fixed-point power flow to study power flow solvability, and for radial networks we derive conditions guaranteeing the existence and uniqueness of a high-voltage power flow solution. These conditions (i) imply exponential convergence of the fixed-point power flow iteration, and (ii) properly generalize the textbook two-bus system results.
%Our conditions (i) imply convergence of the fixed-point power flow iteration, (ii) unify and extend recent results on solvability of decoupled power flow, (iii) directly generalize the textbook two-bus system results, and (iv) provide new insights into how the structure and parameters of the grid influence power flow solvability.
%
\end{abstract}

\begin{IEEEkeywords}
Power flow equations, complex networks, power systems, circuit theory, optimal power flow, fixed point theorems.
\end{IEEEkeywords}

%%%%%%%%%%%%%%%%%%%%%%%%%%%%%%%%%%%%%%%%%%%%%%%%%%%%%%%%%%%%%%%%%%%%%%%%%%%%%%%%
% -------------------------------------------------------------------------------------- %
\section{Introduction}
\label{Section: Introduction}
% -------------------------------------------------------------------------------------- 

%{\tb
%\begin{enumerate}
%\item Make sure I don't mischaracteroze Saverio's paper
%\item Comment on distribution factors?
%\item Error bound on approximation by finding fixed point equation for the difference between $v$ and $v_{\rm lin}$?
%\item Add justficiation in terms of dynamic swing stability (cite Flo, Chiang)
%\item Update DCPF with additional crazy PQ cross coupling term?
%\end{enumerate}
%}

The power flow equations describe the balance and flow of power in a synchronous AC power system. The solutions of these equations (also called operating points of the network) describe the configurations of voltages and currents which (i) are physically consistent with Kichhoff's and Ohm's laws, and (ii) meet the prescribed boundary conditions, specified in terms of fixed power injections or fixed voltage levels at particular nodes in the network. Knowledge of the current system operating point is crucial, as is understanding how the current operating point will change as control actions are taken or as unexpected contingencies occur. As such, the power flow equations are embedded in nearly every power system analysis or  control problem, including optimal power flow and its security-constrained variants, transient and voltage stability assessment, contingency screening, short-circuit analysis, and wide-area monitoring/control \cite{JM-JWB-JRB:08}.

As the equations are nonlinear, the existence of real-valued solutions is not guaranteed: lightly loaded networks typically possess many solutions \cite{DKM-DM-MN:16}, while a network which is loaded sufficiently will possess none. Despite this potential for both multiple reasonable solutions and infeasibility, typically there is a single desirable solution, characterized by high voltage magnitudes at buses and small inter-bus current flows. This solution is often termed stable, as it behaves in a manner consistent with the intuition of operators, and moreover, is a locally exponentially stable equilibrium point for some simplified dynamic grid models \cite{PWS-MAP:90,FD-MC-FB:11v-pnas}. The ability to accurately and consistently calculate this high-voltage solution is incredibly important, and fairly reliable numerical techniques are available for this purpose \cite{DKM-DM-MN:16,SHL:14a,SHL:14b}. 
%
%
%Glimn - Automatic Calculation of Load Flows (Gauss-Seide, 1957)
%Tinney - Power Flow Solution by Newton's Method (1967)
%Stott - Fast Decoupled Load Flow (1974)
%Mehta - Recent advances in computational methods for the power flow equations (2016)
%
While our results have computational implications, our main interest and motivation is the question of power flow feasibility/solvability: for what classes of networks and loading scenarios can we guarantee that the power flow equations are solvable for a useful solution, and what can be rigorously said about this solution?

Aside from intellectual merit, there are at least two important engineering motivations for understanding solvability. The first is to better understand the convergence of iterative numerical algorithms for solving power flow equations. {\tb When a power flow solver diverges, it may be because of a numerical instability in the algorithm, an initialization issue \cite{JST-SAN:89,JST-SAN-HDC:90}, or it may be because no power flow solution exists to be found \cite{DM-DKM-KT:16}. Without a coherent theory of power flow solvability, it is difficult to distinguish between these cases. Our proposed algorithm in Part I is based on a carefully chosen fixed-point iteration. For some restricted classes of networks, our theoretical results in Part II provide a certificate that a unique power flow solution exists, and specify a large set of initializations from which our fixed-point iteration converges exponentially to this solution.}

%combined with the theoretical results in Part II will provide \textemdash{} for some restricted scenarios \textemdash{} a certificate to rule out the second and third cases.} %As a computational tool, the iteration we propose derive here in Part I is insensitive to the choice of initialization, and converges quite robustly to the high-voltage solution.

The second motivation comes from the desire to operate power systems safely yet non-conservatively. Due to the large capital costs of transmission infrastructure investment, system operators are incentivized to operate power networks close to their maximum power transfer limits. The present work is an additional step towards characterizing these nonlinear transfer limits, and understanding in a precise mathematical way how the transfer limits depend on the internal structure of the grid. In this context, our results in Part II provide a topology-dependent loading margin for the grid. This loading margin can serve as a solvability certificate, or as a lower bound on the distance to the maximum power transfer boundary.

% -------------------------------------------------------------------------------------- %
% -------------------------------------------------------------------------------------- %
\subsection{Contributions of Part I and Preview of Part II Results}
\label{Sec:Contributions}
% -------------------------------------------------------------------------------------- %
% -------------------------------------------------------------------------------------- %

This two-part paper presents a new model of power flow in lossless networks, and then leverages this model to obtain (i) a new iterative power flow algorithm, (ii) an approximation of the high-voltage solution, and (iii) new theoretical results on power flow solvability. Our new model is  inspired by the way that phase angles are eliminated in the standard textbook analysis of the two-bus \PV\!\!-\PQ power system \cite[Chapter 2]{TVC-CV:98}. We begin with the lossless power flow equations in polar form, with voltage variables $(V,\theta)$ and power variables $(P,Q)$, and proceed to eliminate the phase angles from the model. The state variables of our new model are (i) the normalized voltages $v_i = V_i/V_i^*$ at \PQ buses, where $V_i^*$ denotes the open-circuit voltage at the $i$th \PQ bus, and (ii) a set of slack variables which enforce Kichhoff's voltage law around cycles in meshed networks. Voltage phase angles are uniquely recovered as functions of these state variables.\footnote{In that phase angles are absent, our model is conceptually similar to the Baran-Wu branch flow model \cite{MEB-FFW:89,SHL:14a}.}

For networks without cycles, these slack variables are discarded, and the model can quickly be manipulated into the fixed-point form $v = f(v)$, where the function $f$ depends on the grid topology, parameters and loading. Motivated by this important radial case, we call our reformulation the \emph{fixed-point power flow} (FPPF). The FPPF model is the main result of Part I, and is summarized in Theorem \ref{Thm:FPPFMeshed}. In Section \ref{Sec:Linearization} we show how the FPPF model naturally leads to an explicit approximate solution to the power flow equations, yielding voltage magnitudes at \PQ buses and phase angles at all buses as explicit functions of active and reactive power injections.

{In Section \ref{Sec:Simulations} we numerically study the FPPF and the accompanying approximate solution using standard IEEE test cases. We show that the lossless power flow equations can be quickly and reliably solved by iterating the FPPF, for both lightly and heavily loaded systems. {\tb The convergence is exponential, at a rate comparable to the fast-decoupled power flow approach, and is extremely insensitive to the choice of initialization. %After convergence, the slack variables quantify how significant cyclic flows of active power are in the network under consideration. 
We also show that our approximate solution is quite accurate in these same standard test cases.}

{\tb
Throughout Part I and Part II we restrict our attention to lossless networks, for two main reasons. Firstly, many high-voltage transmission networks are approximately lossless, with resistance/reactance ratios below $0.2$ (see Section \ref{Sec:Simulations}). For such networks, practice has shown that the lossless assumption is not restrictive. Indeed, the standard power flow model used for dispatch \textemdash{} the linearized ``DC Power Flow'' \textemdash{} explicitly relies on this lossless assumption \cite{BS-JJ-OA:09}, and is widely used in industry. Secondly, power flow solvability remains poorly understood for transmission networks, and the lossless case should be understood before attempting an analysis of the lossy case. We comment further on resistances in Section \ref{Sec:Simulations} and in our conclusions in Part II. 
}
As an informal preview of our main result in Part II, {\tb the pair of existence and uniqueness conditions we derive for radial networks with no connections between \PQ buses} are
\begin{align*}
\Delta + 4\Gamma_{g\ell}^2 &< 1\,,\qquad \Gamma_{gg} < 1\,.
\end{align*}
The first inequality captures voltage stability of \PQ load buses: $\Delta \in {[0,1)}$ is related to reactive power loading, while $\Gamma_{g\ell} \in {[0,\frac{1}{2})}$ is related to active power flow between generators and loads. Roughly speaking, this inequality ensures that voltage magnitudes at \PQ buses stay high. The second inequality on $\Gamma_{gg}$ is an angle stability condition between generator \PV buses, and ensures that phase angle differences between \PV buses stay relatively small. The quantities $\Delta, \Gamma_{g\ell}$ and $\Gamma_{gg}$ depend only on the data of the power flow problem.
These conditions are an exact generalization of conditions found in standard textbooks \cite[Chapter 2]{TVC-CV:98}, \cite[Section 8.1.1]{JM-JWB-JRB:08} for the canonical two-bus \PV\!\!-\PQ power system, generalizing the so-called circle diagram \cite[Figure 8.3]{JM-JWB-JRB:08}. They also unify and extend recent solvability conditions developed for decoupled active power flow \cite{FD-MC-FB:11v-pnas} and for decoupled reactive power flow \cite{jwsp-fd-fb:14c}, which in the above notation read as $\Gamma_{gg} < 1$ and $\Delta < 1$, respectively. {\tb We also present weaker results for networks with connections between \PQ buses, which guarantee only the existence of a solution.}  %Thus, these new conditions precisely quantify the level of optimism inherent in the decoupling assumption.

% -------------------------------------------------------------------------------------- %
% -------------------------------------------------------------------------------------- %
\subsection{Structure of Paper}
\label{Sec:Structure}
% -------------------------------------------------------------------------------------- %
% -------------------------------------------------------------------------------------- %

Section \ref{Sec:Modeling} formally states our modeling assumptions leading to the standard model of coupled, lossless power flow used in the remainder of both papers. 

Section \ref{Sec:Reform} contains the main results of Part I. We introduce the stiffness matrices (Section \ref{Sec:Stiffness}), derive the fixed-point power flow model in a step-by-step fashion (Sections \ref{Sec:Step0}--\ref{Sec:Step3}), discuss the derivation (Section \ref{Sec:Discussion}), and derive an approximate power flow solution based on the FPPF (Section \ref{Sec:Linearization}. The derivation is presented for meshed networks, with the result for radial networks stated as a corollary. 

Section \ref{Sec:Simulations} validates our results numerically on standard test cases, while Section \ref{Sec:Conclusions} summarizes and concludes. The remainder of this section summarizes some vector and matrix notation used extensively throughout the paper, some of which is non-standard but convenient.

% -------------------------------------------------------------------------------------- %
% -------------------------------------------------------------------------------------- %
\subsection{Preliminaries and Notation}
\label{Sec:Notation}
% -------------------------------------------------------------------------------------- %
% -------------------------------------------------------------------------------------- %

{\it Sets, vectors and functions:} For a finite set $\mathcal{N}$, $|\mathcal{N}|$ is its cardinality. 
The set $\real$ (resp. $\real_{\geq 0}$, $\real_{>0}$) is the real (resp. nonnegative, strictly positive) numbers, and $\boldsymbol{\mathrm{j}}$ is the imaginary unit. For $x \in \real^{n}$ and an index set $\mathcal{I} \subset \until{n}$, $[x_i]_{i \in \mathcal{I}} \in \real^{|\mathcal{I}|\times|\mathcal{I}|}$ is the diagonal matrix containing the appropriate elements of $x$. When no confusion can arise, we will simply write $[x] \in \real^{n\times n}$ for the diagonal matrix with $x$ on the diagonal.
We let $\vones[n]$ and $\vzeros[n]$ denote the $n$-dimensional vectors of unit and zero entries, and $\vzeros[]$ is a matrix of all zeros of appropriate dimensions. The $n \times n$ identity matrix is $I_n$. The subspace $\vones[n]^{\perp} \triangleq \setdef {x \in \real^n}{\vones[n]^{\sf T}x = 0}$ is the subspace of $\real^n$ perpendicular to $\vones[n]$. 
%For a matrix $A \in \real^{n \times n}$, $A^{\sf T}$ denotes its transpose, $A^{\dagger}$ its pseudoinverse, $\mathrm{im}(A)$ its image, and $\mathrm{ker}(A)$ its kernel. 
For $x \in \real^n$, we define the vector functions $\bsin(x) \triangleq (\sin(x_1),\ldots,\sin(x_n))^{\sf T}$, with $\barcsin(x)$ and $\bcos(x)$ defined similarly, and for $x \in \real^{n}_{\geq 0}$, $\sqrt{x} = (\sqrt{x_1},\ldots,\sqrt{x_n})^{\sf T}$.

\smallskip

\emph{Graphs and graph matrices : } A graph is a pair $(\mathcal{N},\mathcal{E})$, where $\mathcal{N}$ is the set of nodes and $\mathcal{E} \subseteq \mathcal{N} \times \mathcal{N}$ is the set of edges.
If a label $e \in \until{|\mathcal{E}|}$ and an arbitrary direction is assigned to each edge $e = (i,j) \in \mathcal{E}$, the \emph{node-edge incidence matrix} $A \in \real^{|\mathcal{N}|\times|\mathcal{E}|}$ is defined component-wise as $A_{ke} = 1$ if node $k$ is the source node of edge $e$ and as $A_{ke} = -1$ if node $k$ is the sink node of edge $e$, with all other elements being zero. A graph is \emph{radial} (\emph{acyclic, a tree}) if it contains no cycles.
For $x \in \real^{|\mathcal{N}|}$, $A^{\sf T}x \in \real^{|\mathcal{E}|}$ is the vector with components $x_i-x_j$, with $(i,j) \in \mathcal{E}$. 
We call $\mathrm{ker}(A)$ the \emph{cycle space} of the graph. If the graph is connected, then $\mathrm{ker}(A^{\sf T})=\mathrm{im}(\vones[|\mathcal{N}|])$, with $\mathrm{ker}(A) =
\emptyset$\@ for acyclic graphs. In this radial case, for every $P \in
\vones[|\mathcal{N}|]^\perp$, there exists a unique $p \in
\real^{|\mathcal{E}|}$ satisfying Kirchoff's Current Law (KCL) $P = Ap$
\cite{NB:97,LOC-CAD-EDK:87}. The vector $P$ is interpreted as nodal
injections, with $p$ being the associated edge flows. If a weight $w_{ij} > 0$ is assigned to each edge $(i,j)\in\mathcal{E}$, then ${\sf L} = {\sf L}^{\sf T} = A[w]A^{\sf T}$ is the weighted \emph{Laplacian matrix} for the graph, which is positive semidefinite with $\mathrm{ker}({\sf L}) = \mathrm{im}(\vones[|\mathcal{N}|])$.

\smallskip

\emph{Torus geometry:} The set $\circle$ denotes the \emph{unit circle}, an \emph{angle} is a point $\theta \in \circle$, and $|\theta_1 - \theta_2|$ denotes the \emph{geodesic distance} between two angles $\theta_1,\theta_2 \in \circle$. The \emph{$n$-torus} $\torus^n = \circle \times \cdots \times \circle$ is the Cartesian product of $n$ unit circles.
For $\gamma \in [0,\pi/2)$ and a given graph $(\mathcal{N},\mathcal{E})$, let $\Theta(\gamma) =\{ \theta \in \mathbb T^{|\mathcal{N}|}:\, \max_{(i,j) \in \mathcal E} |\theta_{i} - \theta_{j}| \leq \gamma \}$ be the closed set of angles $\theta = ( \theta_{1},\dots,\theta_{n})$ with neighboring angles $\theta_{i}$ and $\theta_{j}$ no further than $\gamma$ apart. 

\smallskip

% -------------------------------------------------------------------------------------- %
% -------------------------------------------------------------------------------------- %
\section{The Lossless Power Flow Model}
\label{Sec:Modeling}
% -------------------------------------------------------------------------------------- %
% -------------------------------------------------------------------------------------- %

This section introduces the power flow model used throughout the paper and states all modeling assumptions.

% -------------------------------------------------------------------------------------- %
\subsection{Network and Branch Models}
\label{Sec:NetworkBranchModel}
% -------------------------------------------------------------------------------------- %

We model a steady-state synchronous power network as a connected, weighted, and undirected graph $(\mathcal{N},\mathcal{E})$ with nodes (buses) $\mathcal{N}$ and edges (branches) $\mathcal{E} \subseteq \mathcal{N} \times \mathcal{N}$. To each bus $i \in \mathcal{N}$ we associate a phasor voltage $V_i\angle \theta_i$, where $V_i > 0$ is the bus voltage magnitude and $\theta_i \in \circle$ is the voltage phase angle, and a complex power injection $P_i + \j Q_i$. The real part $P_i$ is the \emph{active} power  injection, while $Q_i$ is the \emph{reactive} power injection. There will be two types of buses in our network: we will have $n \geq 1$ \emph{load buses} buses, denoted by the set $\mathcal{N}_L$, and $m \geq 1$  \emph{generator buses}, denoted by the set $\mathcal{N}_G$. Without loss of generality, we order these buses as $\mathcal{N}_L = \{1,\ldots,n\}$ and $\mathcal{N}_G = \{n+1,\ldots,n+m\}$, with $\mathcal{N} = \mathcal{N}_L\cup\mathcal{N}_G$. Models for these buses are stated in Section \ref{Sec:BusModel}.
This partitioning of buses $\mathcal{N} = \mathcal{N}_L\cup\mathcal{N}_G$ induces a partitioning of the branches\footnote{To keep notation under control, we will abbreviate $\vones[|\Ell|]$, $\vzeros[|\Ell|], I_{|\Ell|}$ by $\vones[\ell\ell], \vzeros[\ell\ell], I_{\ell\ell}$, and similarly for the other edge sets.}
\begin{equation}\label{Eq:Edges}
\mathcal{E} = \Ell \cup \Egl \cup \Egg\,,
\end{equation}
where, $\Ell$ contains all branches between nodes $i,j \in \mathcal{N}_L$, $\Egl$ contains all branches between generators and loads, etc. The incidence matrix $A \in \real^{(n+m)\times|\mathcal{E}|}$ of the graph $(\mathcal{N},\mathcal{E})$ inherits both the nodal and branch partitions, and may be written as the 2 $\times$ 3 block matrix
\begin{equation}\label{Eq:Incidence}
A = \left(\begin{array}{c}A_L \\ [0.5ex] \hline \\ [-2ex] A_G\end{array}\right) = 
%\begin{pmatrix}
%A_L^{\ell\ell} & A_L^{g \ell} & \vzeros[]\\
%\vzeros[] & A_G^{g \ell} & A_G^{g g}
%\end{pmatrix}\,.
\left(
\begin{array}{c|c|c}
A_L^{\ell\ell} & A_L^{g \ell} & \vzeros[] \\ [0.5ex] \hline \\ [-2ex] \vzeros[] & A_G^{g \ell} & A_G^{g g}
\end{array}
\right)\,.
\end{equation}
For example, $A_{L}^{\ell\ell}$ is a matrix of size $n \times |\Ell|$, mapping variables defined on the branches between load buses to variables defined at the load buses incident to those branches. The zero submatrices in \eqref{Eq:Incidence} indicate that branches between generators cannot be incident to any load buses, and vice versa. Since the incidence matrix assigns an arbitrary orientation to each branch, we assume without loss of generality that all branches in $\mathcal{E}^{g\ell}$ are oriented from generators to loads: for each $(i,j) \in \mathcal{E}^{g\ell}$, $i \in \mathcal{N}_G$ and $j \in \mathcal{N}_L$. It follows that all non-zero elements of ${A}^{g\ell}_L$ are equal to $-1$, while all non-zero elements of $A^{g\ell}_G$ are equal to $+1$. Figure \ref{Fig:Notation} illustrates these conventions.

\begin{figure}[ht!]
\begin{center}
\includegraphics[width=1\columnwidth]{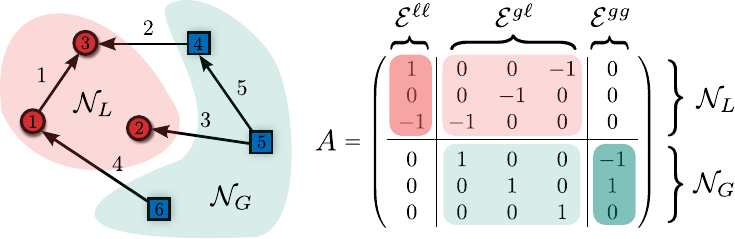}
\caption{Example network showing division of buses and edges with $|\mathcal{N}_G| = 3$ blue \PV buses, $|\mathcal{N}_L| = 3$ red \PQ buses, $|\Ell| = 1$, $|\Egl| = 3$, and $|\Egg| = 1$. Edges $(i,j) \in \mathcal{E}^{g\ell}$ are oriented from \PV buses to \PQ buses, while the orientation of other edges is arbitrary.}
\label{Fig:Notation}
\end{center}
\end{figure}

We assume all transmission lines are inductive, and model them with the standard lumped parameter ${\Pi}$-model, which allows for the inclusion of inductive/capactive shunt loads, off-nomial tap ratios, and line-charging capacitors \cite{PK:94}. With this model, each branch $(i,j) \in \mathcal{E}$ is weighted by a purely imaginary admittance $y_{ij} = \j b_{ij}$ where $b_{ij} \leq 0$.

We encode the admittances and grid topology in the bus admittance matrix $Y \in \real^{(n+m)\times(n+m)}$, with elements $Y_{ij} = -y_{ij}$ and $Y_{ii} = \sum_{j\neq i}^{n+m}y_{ij} + y_{\mathrm{shunt},i}$, where $y_{\mathrm{shunt},i} = \j b_{\mathrm{shunt},i}$ is the shunt element at bus $i$. The susceptance $b_{\mathrm{shunt},i}$ and can be positive (capacitive) or negative (inductive). The susceptance matrix $B \in \real^{(n+m)\times(n+m)}$ is defined as the imaginary part of $Y$, and is characterized as follows \cite{RK-FFW:84}.

\medskip

\begin{fact}[\bf Properties of the Susceptance Matrix]\label{Fact:Susceptance}
If the network contains no phase-shifting transformers, then
\begin{enumerate}[(i)]\setlength{\itemsep}{1.5pt}
\item $B_{ij} = B_{ji} \geq 0$, with $B_{ij} > 0$ if and only if $(i,j) \in \mathcal{E}$ or $(j,i)\in\mathcal{E}$;
\item $B_{ii} = -\sum_{j=1,j\neq i}^{n+m} B_{ij} + b_{\mathrm{shunt},i}$ for all $i \in \mathcal{N}$.
\end{enumerate}
\end{fact}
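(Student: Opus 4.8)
The plan is to prove both claims by direct computation from the definitions of $Y$ and $B$, exploiting that each branch admittance $y_{ij} = \j b_{ij}$ and each shunt $y_{\mathrm{shunt},i} = \j b_{\mathrm{shunt},i}$ is purely imaginary, so that taking imaginary parts commutes with the (real-coefficient) sums defining the entries of $Y$. The entire content is bookkeeping on imaginary parts together with the modeling conventions already in force.

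First I would handle the off-diagonal entries for claim (i). Since $Y_{ij} = -y_{ij} = -\j b_{ij}$ for $i \neq j$, taking imaginary parts gives $B_{ij} = \mathrm{Im}(Y_{ij}) = -b_{ij}$, and the hypothesis $b_{ij} \leq 0$ then yields $B_{ij} \geq 0$ at once. For the symmetry $B_{ij} = B_{ji}$ I would invoke the absence of phase-shifting transformers: with only symmetric (real-turns-ratio) line models the branch admittances satisfy $y_{ij} = y_{ji}$, hence $b_{ij} = b_{ji}$ and $B_{ij} = B_{ji}$. The strict positivity and the ``if and only if'' follow from the convention that a branch is present precisely when its admittance is nonzero: if $(i,j) \in \mathcal{E}$ or $(j,i)\in\mathcal{E}$ then the branch is strictly inductive, so $b_{ij} < 0$ and $B_{ij} = -b_{ij} > 0$; conversely, if no branch joins $i$ and $j$ then $y_{ij} = 0$, whence $B_{ij} = 0$.

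Next, for claim (ii), I would take the imaginary part of the diagonal definition $Y_{ii} = \sum_{j \neq i} y_{ij} + y_{\mathrm{shunt},i}$, obtaining
\[
B_{ii} = \sum_{j \neq i} b_{ij} + b_{\mathrm{shunt},i}.
\]
Substituting the off-diagonal identity $b_{ij} = -B_{ij}$ established above immediately gives $B_{ii} = -\sum_{j \neq i} B_{ij} + b_{\mathrm{shunt},i}$, which is exactly the claim.

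There is no serious obstacle here, as the result is essentially linearity of $\mathrm{Im}(\cdot)$ applied to the definition of $Y$. The only points requiring care are conceptual rather than computational: the ``no phase-shifting transformers'' hypothesis is precisely what guarantees the symmetry $B_{ij} = B_{ji}$, since phase shifters introduce complex turns ratios and render $Y$ (hence $B$) asymmetric; and the branch-existence equivalence in (i) relies on reading the bound $b_{ij} \leq 0$ together with the standing assumption that every present branch is strictly inductive, so that $b_{ij} < 0$ whenever $(i,j) \in \mathcal{E}$.
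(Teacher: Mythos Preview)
Your proposal is correct. Note, however, that the paper does not actually prove this statement: it is labeled a ``Fact'' and supported only by a citation to the literature, with no accompanying argument. Your direct computation from the definitions of $Y$ and $B$ is the standard verification, and there is nothing to compare against in the paper itself.
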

\medskip
We vectorized the relevant variables as $V = (V_1,\ldots,V_{n+m})^{\sf T}$, $\theta = (\theta_1,\ldots,\theta_{n+m})^{\sf T}$, $P = (P_1,\ldots,P_{n+m})^{\sf T}$, and $Q = (Q_1,\ldots,Q_{n+m})^{\sf T}$. Like the incidence matrix, these vectors and the susceptance matrix $B$ inherit the partitioning of buses $\mathcal{N} = \mathcal{N}_L\cup\mathcal{N}_G$ as
\begin{equation}\label{Eq:Susceptance}
V = \begin{pmatrix} V_L \\ V_G \end{pmatrix}\,,\quad Q = \begin{pmatrix}Q_L \\ Q_G \end{pmatrix}\,,\quad B = \begin{pmatrix}
B_{LL} & B_{LG}\\
B_{GL} & B_{GG}
\end{pmatrix}\,.
\end{equation}
The principal submatrix $B_{LL} \in \real^{n \times n}$ describes the weighted interconnections between \PQ buses and is central to our analysis; we refer to it as the \emph{grounded susceptance matrix}, and impose the following standing assumption on it.

\smallskip

\begin{assumption}[\bf Grounded Susceptance Matrix]\label{Ass:Matrix}
The grounded susceptance matrix $B_{LL}$ in \eqref{Eq:Susceptance} is negative definite.
\end{assumption}

\smallskip

Assumption \ref{Ass:Matrix} is usually satisfied in practical networks \cite[Section III]{JT-DS-MIS:86}, and always satisfied in the absence of line-charging and shunt capacitors due to irreducibility and diagonal dominance of the susceptance matrix \cite{RAH-CRJ:85}. This assumption does not disallow shunt capacitors, but merely limits their size so that they do not ``overcompensate'' the network.

% -------------------------------------------------------------------------------------- %
% -------------------------------------------------------------------------------------- %
\subsection{Bus Models}
\label{Sec:BusModel}
% -------------------------------------------------------------------------------------- %
% -------------------------------------------------------------------------------------- %

\emph{Load Models:} Each load bus $i \in \mathcal{N}_L$ is modeled as a \PQ bus, with a fixed active power injection $P_i \in \real$ and a fixed reactive power injection $Q_i \in \real$. At \PQ buses, voltage magnitudes $V_i > 0$ and phases $\theta_i \in \circle$ are free variables. 

\emph{Generator Models:} Each generator bus $i \in \mathcal{N}_G$ is modeled as a \PV bus, with an active power injection $P_i \in \real$ fixed by the prime mover and constant voltage magnitude $V_i > 0$ regulated on the network-side by an Automatic Voltage Regulator (AVR) system. At \PV buses, phase angles $\theta_i \in \circle$ and reactive power injections $Q_i \in \real$ are free variables. {\tb In the present work we do not consider generator reactive power constraints.} We will therefore generally be unconcerned with reactive power injections at \PV buses, as they are {\tb uniquely} determined by \eqref{Eq:Reactive} given the other state variables. In other words, reactive power injections at \PV buses may be considered as ``outputs'' of the power flow problem, rather than state variables. 

The above bus models are uncontroversial when considering power flow solvability, and are the standard models \cite{JM-JWB-JRB:08} used by industry \cite{JVM-KY-SMV-SZD-LMK:13}. Since the network is lossless, no ``slack bus'' is required; all generators are treated as \PV buses.\footnote{Equivalently, select \PV bus $(n+m)$ as the slack bus, and its power injection $P_{n+m} = -\sum_{i=1}^{n+m-1} P_i$ is determined a priori.}

% -------------------------------------------------------------------------------------- %
% -------------------------------------------------------------------------------------- %
\subsection{The Power Flow Equations}
\label{Sec:PowerFlowEquations}
% -------------------------------------------------------------------------------------- %
% -------------------------------------------------------------------------------------- %

Denoting the voltage phasor at bus $i \in \mathcal{N}$ by $U_i = V_ie^{\j\theta_i}$, the bus current injections $I \in \mathbb{C}^{n+m}$ in the network are given by the so-called nodal equations $I = YU = \j BU$. The equations for balance of power may then be written as $P + \j Q = [U]\mathrm{conj}(I^*)$, where $\mathrm{conj}(\cdot)$ denotes complex conjugation. After expanding the complex exponentials into trigonometric terms  and equating real and imaginary parts, one arrives at the celebrated lossless power flow equations
\begin{subequations}
\begin{align}\label{Eq:Active}
P_i &= \sum_{j=1}^{n+m} \nolimits V_iV_jB_{ij}\sin(\theta_i-\theta_j)\,,\quad i \in \mathcal{N}_L \cup \mathcal{N}_G\,,\\\label{Eq:Reactive}
Q_i &= -\sum_{j=1}^{n+m} \nolimits V_iV_jB_{ij}\cos(\theta_i-\theta_j)\,,\quad i \in \mathcal{N}_L\,,
\end{align}
\end{subequations}
written here in polar form, and where we have suppressed the reactive power equations for the \PV buses (Section \ref{Sec:BusModel}). The equations \eqref{Eq:Active}--\eqref{Eq:Reactive} are a set of $2n+m$ nonlinear equations in the $n+m$ angles $\theta$ and the $n$ \PQ bus voltage magnitudes $V_L$. As the angles enter only as differences, there are effectively only $n+m-1$ angles, and the problem appears to be overdetermined. However, note from \eqref{Eq:Active} that $\sum_{i=1}^{n+m} P_i = 0$ for all choices of angles and voltage magnitudes; this equation states the balance of active power in a lossless network. In other words, \eqref{Eq:Active} is not an independent set of equations. Rather than eliminate one angle and one active power flow equation, we will retain both and impose the following assumption on the active power injections $P = (P_1,\ldots,P_{n+m})^{\sf T}$.

\smallskip

\begin{assumption}[\bf Balance Assumption]\label{Ass:Balance}
The necessary condition $P \in \vones[n+m]^{\perp}$ is satisfied.
\end{assumption}

% -------------------------------------------------------------------------------------- %
% -------------------------------------------------------------------------------------- %
\section{Reformulation of the Power Flow Equations to Fixed-Point Power Flow}
\label{Sec:Reform}
% -------------------------------------------------------------------------------------- %
% -------------------------------------------------------------------------------------- %

We now begin the formulation of our new fixed-point power flow model. Several key matrices \textemdash{} which we term the power network stiffness matrices \textemdash{} will appear very naturally during this reformulation process, and we now introduce the reader to them. These matrices quantify the strength of the grid, and are analogous to the stiffness matrices encountered in the theories of mechanical statics and vibrations.

% -------------------------------------------------------------------------------------- %
% -------------------------------------------------------------------------------------- %
\subsection{Open-Circuit Voltages and Stiffness Matrices}
\label{Sec:Stiffness}
% -------------------------------------------------------------------------------------- %
% -------------------------------------------------------------------------------------- %

Letting $V_G = (V_{n+1},\ldots,V_{n+m})^{\sf T}$ denote the vector of generator voltage magnitudes, we introduce the following definition and characterization.

\smallskip

\begin{definition}[\bf Open-Circuit Load Voltages]\label{Def:VLstar}
The \emph{open-circuit load voltages} $V_L^* \in \real^{n}$ are defined using the susceptance matrix \eqref{Eq:Susceptance} as
\begin{equation}\label{Eq:VLstar}
V_L^* \triangleq -B_{LL}^{-1}B_{LG}V_G\,.
\end{equation}
\end{definition}

\begin{proposition}[\bf Open-Circuit Solution]\label{Prop:VLstar}
Each component of $V_L^*$ is strictly positive. Moreover, when $P = \vzeros[n+m]$ and $Q_L = \vzeros[n]$, $(\theta,V_L) = (\vzeros[n+m],V_L^*)$ is a solution of \eqref{Eq:Active}--\eqref{Eq:Reactive}.
\end{proposition}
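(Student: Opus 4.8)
The plan is to verify the two claims separately. For positivity of $V_L^*$, I would argue from the structure of $-B_{LL}^{-1}$. By Assumption~\ref{Ass:Matrix}, $B_{LL}$ is negative definite, so $-B_{LL}$ is positive definite. By Fact~\ref{Fact:Susceptance}, the off-diagonal entries of $B$ (hence of $B_{LL}$) are nonnegative, so $-B_{LL}$ has nonpositive off-diagonal entries and is thus a symmetric $M$-matrix. Consequently $(-B_{LL})^{-1} = -B_{LL}^{-1}$ is entrywise nonnegative. I also need the forcing vector $-B_{LG}V_G$ to be entrywise nonnegative and not identically zero on each component's ``reach.'' Since $B_{LG}$ collects the off-diagonal susceptances between load and generator buses, its entries are nonpositive, so $-B_{LG} \geq \vzeros[]$ entrywise, and $V_G > \vzeros[]$ by assumption; hence $V_L^* = (-B_{LL})^{-1}(-B_{LG})V_G \geq \vzeros[n]$. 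To upgrade ``$\geq$'' to strict positivity, I would invoke connectivity of the graph: every load bus is connected (through a path) to at least one generator bus, which forces each component of $V_L^*$ to be strictly positive via irreducibility of the nonnegative matrix $(-B_{LL})^{-1}$.

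**The open-circuit solution.** For the second claim, I would substitute $\theta = \vzeros[n+m]$ and $V_L = V_L^*$ into \eqref{Eq:Active}--\eqref{Eq:Reactive} and check both sets of equations vanish appropriately. With all angles equal to zero, every $\sin(\theta_i - \theta_j) = 0$, so the right-hand side of \eqref{Eq:Active} is identically zero and matches $P = \vzeros[n+m]$ trivially, for any voltage magnitudes. The substance is in the reactive equation \eqref{Eq:Reactive}: with $\cos(\theta_i - \theta_j) = 1$, it reduces to $Q_i = -\sum_{j=1}^{n+m} V_i V_j B_{ij}$ for $i \in \mathcal{N}_L$, i.e. in vector form $Q_L = -[V_L]\,(B_{LL}V_L + B_{LG}V_G)$. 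I must show the bracketed term vanishes when $V_L = V_L^*$. But $B_{LL}V_L^* + B_{LG}V_G = B_{LL}(-B_{LL}^{-1}B_{LG}V_G) + B_{LG}V_G = \vzeros[n]$ by the very definition \eqref{Eq:VLstar}, so $Q_L = \vzeros[n]$ as required.

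**Main obstacle.** The routine part is the reactive-equation check, which collapses immediately once \eqref{Eq:VLstar} is recognized as exactly the condition $B_{LL}V_L^* + B_{LG}V_G = \vzeros[n]$. I expect the genuine work to be the strict positivity of $V_L^*$: establishing that $-B_{LL}$ is an $M$-matrix and that its inverse is entrywise positive (not merely nonnegative). The nonnegativity follows from the $M$-matrix property, but strict positivity requires an irreducibility argument tied to network connectivity together with the fact that each load component receives a strictly positive contribution from the generator voltages along some path. I would lean on the cited characterization in \cite{RAH-CRJ:85} or a standard Perron--Frobenius/$M$-matrix inverse-positivity result to close this cleanly.
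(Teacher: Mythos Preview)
Your overall strategy matches the paper's proof: an $M$-matrix argument for positivity of $V_L^*$, connectivity for strictness, and direct substitution for the open-circuit solution. The reactive-power check is exactly the paper's computation (the paper routes it through Lemma~\ref{Lem:EquivFormulas}, but the content is your identity $B_{LL}V_L^* + B_{LG}V_G = \vzeros[n]$).

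There is, however, a sign slip in your treatment of $B_{LG}$. By Fact~\ref{Fact:Susceptance}(i), the off-diagonal entries of $B$ satisfy $B_{ij} \geq 0$ (the susceptance matrix is defined so that $B_{ij} = -b_{ij} \geq 0$ for inductive branches), so $B_{LG}$ is entrywise \emph{nonnegative}, not nonpositive. Correspondingly, the correct factorization is
\[
V_L^* = -B_{LL}^{-1}B_{LG}V_G = (-B_{LL})^{-1}\,B_{LG}\,V_G\,,
\]
with both $(-B_{LL})^{-1}$ and $B_{LG}$ nonnegative; your expression $(-B_{LL})^{-1}(-B_{LG})V_G$ has an extraneous minus sign. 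The two sign errors happen to cancel in your final inequality, so the conclusion survives, but the reasoning as written is incorrect and should be fixed.
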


\begin{proof}. See appendix. \end{proof}

\smallskip

When there are no shunt elements attached at \PQ buses, the matrix $-B_{LL}^{-1}B_{LG}$ is a nonnegative row-stochastic matrix \cite[Lemma II.1]{FD-FB:11d}, and each open-circuit load voltage $V_i^*$ is therefore a weighted average of \PV bus voltage set points. Capacitive shunt elements tend to push these open-circuit voltages up, while inductive shunt elements pull them down.

We now use these open-circuit voltages to define three matrices of interest, which have units of power.

\smallskip

\begin{definition}\label{Def:Stiffness}\textbf{(Laplacian, Branch, and Nodal Stiffness Matrices):}
The \emph{branch stiffness matrix} $\mathsf{D} \in \real^{|\mathcal{E}|\times|\mathcal{E}|}$ is the diagonal positive definite matrix\footnote{For convenience we abuse our notation a bit, and here use $V_i^*$ also for the fixed \PV bus voltages.}
\begin{equation}\label{Eq:DMatrix}
\mathsf{D} \triangleq [V_i^*V_j^*B_{ij}]_{(i,j)\in\mathcal{E}}\,.
\end{equation}
The \emph{Laplacian stiffness matrix} ${\sf L} = {\sf L}^{\sf T} \in \real^{(n+m)\times(n+m)}$ is the symmetric positive semidefinite matrix
\begin{equation}\label{Eq:L}
{\sf L} \triangleq A\mathsf{D}A^{\sf T}\,.
\end{equation}
With the partitioning of the susceptance matrix $B$ in \eqref{Eq:Susceptance}, the \emph{nodal stiffness matrix} $\mathsf{S} \in \real^{n \times n}$ is the symmetric, negative definite Metzler matrix
\begin{equation}\label{Eq:Qcrit}
\mathsf{S} \triangleq \frac{1}{4}[V_L^*]B_{LL}[V_L^*]\,.
\end{equation}
\end{definition}

\smallskip

To interpret ${\sf D}$, note from \eqref{Eq:Active} that $V_iV_jB_{ij}$ is the maximum active power transfer along the branch $(i,j) \in \mathcal{E}$. Thus, the branch stiffness matrix \eqref{Eq:DMatrix} captures the maximum branch-wise power transfers when voltages $V_i$ take their open-circuit values $V_i^*$. The Laplacian stiffness matrix \eqref{Eq:L}, first introduced in \cite{FD-MC-FB:11v-pnas}, is a generalization of the branch matrix $\mathsf{D}$ for meshed networks. The nodal stiffness matrix was introduced by the author in \cite{jwsp-fd-fb:14c}.
\emph{Roughly speaking, $\mathsf{D}$ and ${\sf L}$ quantify how strong the branches of the network are, while $\mathsf{S}$ quantifies the interconnection strength between \PQ buses.}
%\end{quote}
%
%
{All three matrices depend on the open-circuit voltages \eqref{Eq:VLstar}, which in turn depend quite densely on the interconnection structure of the network, including shunt elements at \PQ buses.}
The branch stiffness matrix $\mathsf{D}$ inherits the partitioning of the branches $\mathcal{E}$ in \eqref{Eq:Edges} as
\begin{equation}\label{Eq:DMatrix2}
%\mathsf{D} = \begin{pmatrix}
%\mathsf{D}_{\ell\ell} & \vzeros[] & \vzeros[]\\
%\vzeros[] & \mathsf{D}_{g\ell} & \vzeros[]\\
%\vzeros[] & \vzeros[] & \mathsf{D}_{gg}
%\end{pmatrix}\,.
{\sf D} = \mathrm{blkdiag}(\mathsf{D}_{\ell\ell}\,, \mathsf{D}_{g\ell}\,, \mathsf{D}_{gg})\,.
\end{equation}

\subsection{Procedure for Deriving Fixed-Point Power Flow}
\label{Sec:Step0}

The first major challenge one encounters in analyzing \eqref{Eq:Active}-\eqref{Eq:Reactive} is that variables appear as complicated products of trigonometric nonlinearities $\sin(\cdot)$ and $\cos(\cdot)$ with quadratic nonlinearities $V_iV_j$. More precisely, for any branch $(i,j)\in\mathcal{E}$ the product $V_iV_j$ will be a quadratic nonlinearity if $i$ and $j$ are \PQ buses, linear if $i$ and $j$ are a \PQ\!\!-\PV pair, or a constant if both buses are \PV buses. We must therefore first figure out how to isolate these different combinations in a useful way. After this initial hurdle, our reformulation procedure will consist of three main steps:

\begin{enumerate}\setlength{\itemsep}{1.5pt}
\item[] \textbf{Step 1:} Isolate the phases in the active power flow \eqref{Eq:Active};
\item[] \textbf{Step 2:} Reformulate the $n$ reactive power flow equations \eqref{Eq:Reactive} into a form which partially isolates the phase angles;
\item[] \textbf{Step 3:} {\tb Combine the two reformulations by eliminating the phase angles from the reformulated reactive power flow, resulting in a single equation in the voltage variables, and rearrange the result into a fixed-point equation.}
\end{enumerate}

% -------------------------------------------------------------------------------------- %
% -------------------------------------------------------------------------------------- %
\subsection{Reformulation Step 1: Active Power Flow}
\label{Sec:Step1}
% -------------------------------------------------------------------------------------- %
% -------------------------------------------------------------------------------------- %

We begin by introducing some additional notation associated with the incidence matrix \eqref{Eq:Incidence}. Due to the way (see Section \ref{Sec:NetworkBranchModel}) that edge directions were assigned for the graph $(\mathcal{N},\mathcal{E})$, we may write $A$ as the difference between two nonnegative matrices $A = A(+) - A(-)$, i.e.,
\begin{equation}\label{Eq:IncidencePlusMinus}
\begin{aligned}
A &= 
\begin{pmatrix}
A_L^{\ell\ell}(+) & \vzeros[] & \vzeros[]\\
\vzeros[] & A_G^{g \ell}(+) & A_G^{gg}(+)
\end{pmatrix}\\
&\quad -
\begin{pmatrix}
A_L^{\ell\ell}(-) & A_L^{g \ell}(-) & \vzeros[]\\
\vzeros[] & \vzeros[] & A_G^{gg}(-)
\end{pmatrix}\,.
\end{aligned}
\end{equation} 
The matrix $A(+)$ indexes the buses at the \emph{sending end} of each branch, while $A(-)$ indexes the corresponding \emph{receiving end} buses. It follows that $A(+)^{\sf T}V$ is vector of sending-end voltages, while $A(-)^{\sf T}V$ is the vector of receiving end voltages. In vector form, we compute that
\begin{subequations}
\begin{align}\label{Eq:EdgeVoltage1}
A(+)^{\sf T}V = A(+)^{\sf T} \begin{pmatrix}V_L \\ V_G \end{pmatrix} &
%\begin{pmatrix}
%A_L^{\ell\ell}(+)^{\sf T} & \vzeros[]\\
%\vzeros[] & A_G^{g \ell}(+)^{\sf T}\\
%\vzeros[] & A_G^{gg}(+)^{\sf T}
%\end{pmatrix}\begin{pmatrix}V_L \\ V_G \end{pmatrix}
= 
\begin{pmatrix}
A_L^{\ell\ell}(+)^{\sf T}V_L \\ 
A_G^{g \ell}(+)^{\sf T} V_G\\
A_G^{gg}(+)^{\sf T} V_G
\end{pmatrix}\,,\\
\label{Eq:EdgeVoltage2}
A(-)^{\sf T}V = A(-)^{\sf T} \begin{pmatrix}V_L \\ V_G \end{pmatrix} &= 
%\begin{pmatrix}
%A_L^{\ell\ell}(-)^{\sf T} & \vzeros[]\\
%A_L^{g \ell}(-)^{\sf T} & \vzeros[]\\
%\vzeros[] & A_G^{gg}(-)^{\sf T}
%\end{pmatrix}\begin{pmatrix}V_L \\ V_G \end{pmatrix}
%= 
\begin{pmatrix}
A_L^{\ell\ell}(-)^{\sf T}V_L \\ 
A_L^{g \ell}(-)^{\sf T} V_L\\
A_G^{gg}(-)^{\sf T} V_G
\end{pmatrix}\,.
\end{align}
\end{subequations}
For $(i,j) \in \mathcal{E}$, the quadratic products $V_iV_jB_{ij}$ can therefore be constructed from \eqref{Eq:EdgeVoltage1}--\eqref{Eq:EdgeVoltage2} through the formula
\begin{align}\nonumber
&[V_iV_jB_{ij}]_{(i,j)\in\mathcal{E}}\\ \vspace{1em} %&= %\mathrm{diag}\left(A_+^{\sf T} \begin{pmatrix}V_L \\ V_G \end{pmatrix}\right) \cdot \mathrm{diag}\left(A_-^{\sf T} \begin{pmatrix}V_L \\ V_G \end{pmatrix}\right) \cdot \mathrm{diag}(B_{ij})_{(i,j)\in\mathcal{E}}\\
\label{Eq:Step1Temp1}
&= \left[\begin{pmatrix}
A_L^{\ell\ell}(+)^{\sf T}V_L \\ 
A_G^{g \ell}(+)^{\sf T} V_G\\
A_G^{gg}(+)^{\sf T} V_G
\end{pmatrix}\right] \left[\begin{pmatrix}
A_L^{\ell\ell}(-)^{\sf T}V_L \\ 
A_L^{g \ell}(-)^{\sf T} V_L\\
A_G^{gg}(-)^{\sf T} V_G
\end{pmatrix}\right]  [B_{ij}]_{(i,j)\in\mathcal{E}}\,.
\end{align}
It follows from \eqref{Eq:Step1Temp1} that the branch stiffness matrix $\mathsf{D} = [V_i^*V_j^*B_{ij}]_{(i,j)\in\mathcal{E}}$ in \eqref{Eq:DMatrix} has the vector representation
\begin{equation}\label{Eq:DMatrix4}
\mathsf{D} = \left[\begin{pmatrix}
A_L^{\ell\ell}(+)^{\sf T}V_L^* \\ 
A_G^{g \ell}(+)^{\sf T} V_G\\
A_G^{gg}(+)^{\sf T} V_G
\end{pmatrix}\right] \left[\begin{pmatrix}
A_L^{\ell\ell}(-)^{\sf T}V_L^* \\ 
A_L^{g \ell}(-)^{\sf T} V_L^*\\
A_G^{gg}(-)^{\sf T} V_G
\end{pmatrix}\right]  [B_{ij}]\,,
\end{equation}
and the submatrices in \eqref{Eq:DMatrix2} may be easily identified by comparing to \eqref{Eq:DMatrix4}. We now find it useful to introduce a change of voltage variables. Using the open-circuit load voltages $V_L^*$ defined in \eqref{Eq:VLstar}, consider the bijective change of variables
\begin{equation}\label{Eq:x}
V_L = [V_L^*]v \quad \Longleftrightarrow \quad v = [V_L^*]^{-1}V_L \,.
\end{equation}
Thus, $v_i$ is the load bus voltage $V_i$ normalized by its open-circuit value $V_i^*$. Inserting the coordinate change \eqref{Eq:x} into \eqref{Eq:Step1Temp1}, we find that
\begin{equation}\label{Eq:DMatrix5}
[V_iV_jB_{ij}]_{(i,j)\in\mathcal{E}} = \mathsf{D}\cdot[h(v)]\,,
\end{equation}
where the map $\map{h}{\real^n}{\real^{|\mathcal{E}|}}$ satisfies $h(\vones[n]) = \vones[|\mathcal{E}|]$ and is defined by
\begin{equation}\label{Eq:VVectorForm}
\begin{aligned}
&h(v) = \begin{pmatrix}h_{\ell\ell}(v) \\ h_{g\ell}(v) \\ h_{gg}(v)\end{pmatrix}
&= 
\begin{pmatrix}
[A_L^{\ell\ell}(+)^{\sf T}v](A_L^{\ell\ell}(-)^{\sf T}v)\\
A_L^{g\ell}(-)^{\sf T}v\\
\vones[gg]\\
\end{pmatrix}\,.
%\cdot
\end{aligned}
\end{equation}
More explicitly, for any edge $e = (i,j) \in \mathcal{E}$, $h_{e}(v)$ satisfies
\begin{equation}\label{Eq:VComponentForm}
h_{e}(v) =
  \begin{cases}
   v_iv_j & \text{if } e=(i,j) \in \mathcal{E}^{\ell \ell}\,\\
   v_j       & \text{if } e=(i,j) \in \mathcal{E}^{g \ell}\,\\
   1 & \text{if } e=(i,j) \in \mathcal{E}^{gg}\,
  \end{cases}\,.
\end{equation}
\smallskip
In equation \eqref{Eq:DMatrix5}, we have decomposed the products $V_iV_jB_{ij}$ into a product of the branch stiffness matrix $\mathsf{D}$ and a nonlinear function $h(v)$. We now return to the active power flow \eqref{Eq:Active}.
In vector form, \eqref{Eq:Active} is written as
\begin{equation}\label{Eq:ActivePowerVector}
P = A\,[V_iV_jB_{ij}]_{(i,j)\in\mathcal{E}}\,\bsin(A^{\sf T}\theta)\,,
\end{equation}
as may be verified by applying the definition of the incidence matrix. Substituting our result \eqref{Eq:DMatrix5} for $[V_iV_jB_{ij}]_{(i,j)\in\mathcal{E}}$ into the active power flow \eqref{Eq:ActivePowerVector} we obtain
\begin{equation}\label{Eq:ActivePowerD}
P = A\,\mathsf{D}\cdot[h(v)]\cdot \bsin(A^{\sf T}\theta)\,.
\end{equation}
Let $c \triangleq \mathrm{dim}(\mathrm{ker}(A))$ be the dimension of the cycle space for the graph $(\mathcal{N},\mathcal{E})$. In other words, $c$ is the number of independent cycles. Let $C \in \real^{|\mathcal{E}|\times c}$ be a matrix whose columns span $\mathrm{ker}(A)$.%
\footnote{{\tb One may always find a so-called totally unimodular basis for the cycle space $\mathrm{ker}(A)$, in which case one may take $C \in \{-1,0,1\}^{|\mathcal{E}|\times c}$ as the corresponding \emph{(oriented) edge-cycle incidence matrix} \cite[Section 3]{TK-CL-KM-DM-RR-TU-KAZ:09}. In this case, $C$ is totally unimodular \cite[Theorem 3.4]{TK-CL-KM-DM-RR-TU-KAZ:09}; we proceed with this choice.}} 
Inspired by \cite[SI Theorem 1]{FD-MC-FB:11v-pnas}, the following result shows that by using the Laplacian stiffness matrix ${\sf L}$, equation \eqref{Eq:ActivePowerD} can be solved for $\bsin(A^{\sf T}\theta)$ in terms of the voltages $v$ and a set of slack variables $y \in \real^c$ which account for loop flows of active power.

\smallskip

\begin{lemma}\label{Lem:ActiveMesh}\textbf{(Active Power Reformulation for Meshed Networks):}
Consider the active power flow equation \eqref{Eq:Active}, and let $\map{\psi}{\real_{>0}^n \times \real^c}{\real^{|\mathcal{E}|}}$ be defined by\footnote{Here, ${\sf L}^{\dagger}$ denotes the Moore-Penrose pseudoinverse of ${\sf L}$.}
\begin{equation}\label{Eq:MeshSolution}
\psi(v,y) \triangleq [h(v)]^{-1}\left(A^{\sf T}{\sf L}^{\dagger}P + \mathsf{D}^{-1}Cy\right)\,.
\end{equation}
The following two statements are equivalent:
\begin{enumerate}\setlength{\itemsep}{1.5pt}
\item[(i)] $(\theta,V_L) \in \Theta(\pi/2) \times \real^{n}_{>0}$ is a solution of \eqref{Eq:Active};
\item[(ii)] $(v,y) \in \real_{>0}^n \times \real^{c}$ is a solution of
\begin{equation}\label{Eq:CycleConstraint}
\vzeros[c] = C^{\sf T}\barcsin(\psi(v,y)) \qquad {\tb (\boldsymbol{\mathrm{mod}}\,2\pi)}\,.
\end{equation}
with branch-wise angle differences $\eta = A^{\sf T}\theta\,\, {\tb (\boldsymbol{\mathrm{mod}}\,2\pi)} \in [-\frac{\pi}{2},\frac{\pi}{2}]^{|\mathcal{E}|}$ recovered via
\begin{equation}\label{Eq:Candidate}
\bsin(\eta) = \psi(v,y)\,.
\end{equation}
\end{enumerate}
\end{lemma}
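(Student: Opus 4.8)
The plan is to reduce the vector active-power balance \eqref{Eq:ActivePowerD} to a linear equation in the branch-flow vector $s \triangleq [h(v)]\bsin(A^{\sf T}\theta)$, to solve that equation explicitly with the pseudoinverse ${\sf L}^{\dagger}$, and then to re-express its solvability as a condition purely on the branch angle differences $\eta = A^{\sf T}\theta$. Two linear-algebraic facts drive everything. First, since $\mathsf{D} \succ 0$ and the graph is connected, $\ker({\sf L}) = \ker(A^{\sf T}) = \mathrm{im}(\vones[n+m])$, so $\range({\sf L}) = \vones[n+m]^{\perp}$; Assumption \ref{Ass:Balance} places $P$ in this subspace, whence ${\sf L}{\sf L}^{\dagger}P = P$. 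Second, $AC = \vzeros[]$ because the columns of $C$ span $\ker(A)$. Combining these with \eqref{Eq:L} shows that $\mathsf{D}A^{\sf T}{\sf L}^{\dagger}P$ is a particular solution of the KCL-type system $Ax = P$, since $A\,\mathsf{D}A^{\sf T}{\sf L}^{\dagger}P = {\sf L}{\sf L}^{\dagger}P = P$.

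For (i) $\Rightarrow$ (ii), take a solution $(\theta,V_L) \in \Theta(\pi/2)\times\real^n_{>0}$, set $v = [V_L^*]^{-1}V_L \in \real^n_{>0}$ and $\eta = A^{\sf T}\theta \in [-\tfrac{\pi}{2},\tfrac{\pi}{2}]^{|\mathcal{E}|}$, and rewrite \eqref{Eq:Active} in the form \eqref{Eq:ActivePowerD} using \eqref{Eq:DMatrix5}, so that $A\mathsf{D}s = P$ with $s = [h(v)]\bsin(\eta)$. Because every solution of $Ax = P$ is the above particular solution plus an element of $\ker(A) = \range(C)$, there exists $y \in \real^c$ with $\mathsf{D}s = \mathsf{D}A^{\sf T}{\sf L}^{\dagger}P + Cy$; dividing by the positive diagonal matrices $\mathsf{D}$ and $[h(v)]$ (here $h(v) > 0$ componentwise since $v > 0$) gives $\bsin(\eta) = \psi(v,y)$, which is \eqref{Eq:Candidate}. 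Applying $\barcsin$ componentwise, legitimate because $\eta \in [-\tfrac{\pi}{2},\tfrac{\pi}{2}]^{|\mathcal{E}|}$, yields $\eta \equiv \barcsin(\psi(v,y)) \pmod{2\pi}$, and left-multiplying by $C^{\sf T}$ while using $C^{\sf T}A^{\sf T} = (AC)^{\sf T} = \vzeros[]$ produces exactly \eqref{Eq:CycleConstraint}.

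For the converse, let $(v,y) \in \real^n_{>0}\times\real^c$ solve \eqref{Eq:CycleConstraint}; implicit in this statement is $\psi(v,y) \in [-1,1]^{|\mathcal{E}|}$ so that $\barcsin$ is defined. Put $\eta = \barcsin(\psi(v,y)) \in [-\tfrac{\pi}{2},\tfrac{\pi}{2}]^{|\mathcal{E}|}$, so that $\bsin(\eta) = \psi(v,y)$, and $V_L = [V_L^*]v \in \real^n_{>0}$. The constraint $C^{\sf T}\eta \equiv \vzeros[c] \pmod{2\pi}$ is precisely the condition under which $\eta$ is realizable as $A^{\sf T}\theta \pmod{2\pi}$ for some $\theta \in \torus^{n+m}$; such a $\theta$ then lies in $\Theta(\pi/2)$ by the bound on $\eta$. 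Reversing the algebra of the previous paragraph, namely $A\mathsf{D}[h(v)]\bsin(A^{\sf T}\theta) = A(\mathsf{D}A^{\sf T}{\sf L}^{\dagger}P + Cy) = {\sf L}{\sf L}^{\dagger}P = P$ (using $2\pi$-periodicity of $\bsin$ so that $\bsin(A^{\sf T}\theta) = \bsin(\eta)$), recovers \eqref{Eq:ActivePowerD} and hence \eqref{Eq:Active}, completing the equivalence.

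The step I would treat most carefully is the torus-realizability claim used in the converse: that $C^{\sf T}\eta \equiv \vzeros[c] \pmod{2\pi}$ implies the existence of $\theta \in \torus^{n+m}$ with $A^{\sf T}\theta \equiv \eta \pmod{2\pi}$. The ``only if'' direction is immediate from $AC = \vzeros[]$, but the ``if'' direction concerns the image of $A^{\sf T}$ over the torus rather than over $\real^{n+m}$, and this is exactly where the totally unimodular choice of cycle basis $C$ flagged in the earlier footnote is essential. Concretely, I would fix a spanning tree, set a root angle arbitrarily, and integrate $\eta$ along tree edges to define $\theta$; the residual consistency conditions on the $c$ chords then coincide with the fundamental-cycle equations \eqref{Eq:CycleConstraint}, and unimodularity of $C$ guarantees that these conditions close up modulo $2\pi$, so that a genuine torus-valued $\theta$ exists. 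All remaining manipulations are routine linear algebra justified by $\mathsf{D} \succ 0$, $[h(v)] \succ 0$, and the projector identity ${\sf L}{\sf L}^{\dagger}P = P$.
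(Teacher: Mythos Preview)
Your proposal is correct and follows essentially the same route as the paper: both reduce \eqref{Eq:ActivePowerD} to a linear equation in the branch variable, parameterize the general solution as the particular solution $\mathsf{D}A^{\sf T}{\sf L}^{\dagger}P$ plus $Cy$, and handle the cycle constraint via $C^{\sf T}A^{\sf T}=\vzeros[]$ and total unimodularity of $C$. The only cosmetic difference is in the converse step: the paper invokes total unimodularity abstractly (citing a result guaranteeing an integer preimage $k\in\mathbb{Z}^{|\mathcal{E}|}$ of $k'\in\mathbb{Z}^c$ under $C^{\sf T}$), whereas you propose the equivalent spanning-tree construction to realize $\eta$ as $A^{\sf T}\theta$ on the torus; both arguments are standard and yield the same conclusion.
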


\begin{proof}.
Our development so far shows that \eqref{Eq:Active} is equivalent to \eqref{Eq:ActivePowerD}, so we show equivalence of \eqref{Eq:ActivePowerD} and \eqref{Eq:CycleConstraint}--\eqref{Eq:Candidate}. First, simply make a change of variables $\tilde{\psi} = \bsin(A^{\sf T}\theta)$. Then \eqref{Eq:ActivePowerD} may be written as the pair of equations
\begin{subequations}
\begin{align}
\label{Eq:ActiveExpanded1}
P &= A\,\mathsf{D}\cdot [h(v)] \cdot \tilde{\psi}\\
\label{Eq:ActiveExpanded2}
\tilde{\psi} &= \bsin(A^{\sf T}\theta)\,.
\end{align}
\end{subequations}
The equation \eqref{Eq:ActiveExpanded1} is linear in $\tilde{\psi}$, and we claim that $\psi(v,y)$ as defined in \eqref{Eq:MeshSolution} is the general solution. Substituting \eqref{Eq:MeshSolution} into \eqref{Eq:ActiveExpanded1}, we indeed find that
\begin{align*}
P &= A\mathsf{D}\left(A^{\sf T}{\sf L}^{\dagger}P+\mathsf{D}^{-1}Cy\right)\\
&= \underbrace{A\mathsf{D}A^{\sf T}}_{={\sf L}}{\sf L}^{\dagger}P + A\mathsf{D}\mathsf{D}^{-1}Cy
= \Pi P + \underbrace{AC}_{=\vzeros[]}y = P\,,
\end{align*}
where ${\sf L}{\sf L}^{\dagger} = \Pi$ is the projection matrix onto $\vones[n+m]^{\perp}$, and we used the facts that $P \in \vones[n+m]^{\perp}$ and that $AC = \vzeros[]$ by construction. Thus \eqref{Eq:MeshSolution} is indeed the general solution to \eqref{Eq:ActiveExpanded1}, with the first term being a particular solution and the second term parameterizing the homogeneous solution in the slack variable $y \in \real^c$. Thus, \eqref{Eq:ActivePowerD} is equivalent to
\begin{equation}\label{Eq:ActiveExpanded3}
\bsin(A^{\sf T}\theta) = \psi(v,y)\,.
\end{equation}
{\tb 
It remains only to show that \eqref{Eq:CycleConstraint} and \eqref{Eq:ActiveExpanded3} are equivalent. If $(\theta,V_L) \in \Theta(\frac{\pi}{2}) \times \real^{n}_{>0}$ is a solution of \eqref{Eq:Active}, it follows from \eqref{Eq:ActiveExpanded3} that $\barcsin(\psi(v,y)) = A^{\sf T}\theta + 2\pi k$ for some integer vector $k = (k_1,\ldots,k_{|\mathcal{E}|})^{\sf T} \in \mathbb{Z}^{|\mathcal{E}|}$. Left-multiplying this equality by $C^{\sf T}$ and using that $C^{\sf T}A^{\sf T} = (AC)^{\sf T} = \vzeros[]$, we have
\begin{equation}\label{Eq:ctarcsin}
C^{\sf T}\barcsin(\psi(v,y)) = 2\pi\,C^{\sf T}k\,.
\end{equation}
Since $C \in \{-1,0,1\}^{|\mathcal{E}|\times c}$, $C^{\sf T}k \in \mathbb{Z}^{c}$, and therefore each component of the right-hand side of \eqref{Eq:ctarcsin} is an integer multiple of $2\pi$. It follows that \eqref{Eq:ctarcsin} is equivalent to \eqref{Eq:CycleConstraint}.
%
%
%By the fundamental theorem of linear algebra, this holds if and only if $\barcsin(\psi(v,y)) {\tb (\boldsymbol{\mathrm{mod}}\,2\pi)} \perp \mathrm{ker}(A)$. Since $\mathrm{ker}(C^{\sf T}) = \mathrm{ker}(A)$ by construction, this orthogonality is equivalent to \eqref{Eq:CycleConstraint}. 
Conversely, if \eqref{Eq:CycleConstraint} holds, then $C^{\sf T}\barcsin(\psi(v,y)) = 2\pi k^\prime$ for some integer vector $k^\prime = (k^\prime_1,\ldots,k^\prime_c)^{\sf T} \in \mathbb{Z}^{c}$. 
Since $C$ has full rank and is totally unimodular, we can always find a $k \in \mathbb{Z}^{|\mathcal{E}|}$ such that $C^{\sf T}k = k^\prime$ \cite[Theorem 5.20]{BK-JV:08}.
The general solution to $C^{\sf T}\barcsin(\psi(v,y)) = 2\pi k^\prime$ may therefore be written as $\barcsin(\psi(v,y)) = 2\pi k + A^{\sf T}\theta$ for some $\theta \in \real^{n +m}$, with $2\pi k$ being the particular solution and $A^{\sf T}\theta$ parameterizing the homogeneous solution. Taking the $\bsin(\cdot)$ of both sides yields \eqref{Eq:ActiveExpanded3}, which completes the proof. 
%C^{\sf T}\barcsin(\psi(v,y)) \in \mathrm{im}(A^{\sf T})$, and hence the equation $\barcsin(\psi(v,y)) = A^{\sf T}\theta$ can be solved for $\theta$, as the left-hand side is in the image of $A^{\sf T}$. It follows that the two formulations are equivalent.
}
\end{proof}

{\tb The modulo operation in Lemma \ref{Lem:ActiveMesh} is subtle, but is required to capture so-called loop flows; see \cite[Remark 5.3.2]{FD:13} for details.} Equations \eqref{Eq:CycleConstraint} and \eqref{Eq:Candidate} are our desired reformulation of the active power flow equations \eqref{Eq:Active}; \eqref{Eq:MeshSolution} and \eqref{Eq:Candidate} are essentially the explicit solution, while \eqref{Eq:CycleConstraint} enforces that Kichhoff's voltage law holds true around the cycles of the network. The following corollary shows that in radial networks, the formula \eqref{Eq:MeshSolution} which uses the Laplacian stiffness matrix ${\sf L}$ may be replaced by a simpler formula using the branch stiffness matrix $\mathsf{D}$.

\smallskip

\begin{corollary}\label{Cor:AcyclicActive}\textbf{(Active Power Reformulation for Radial Networks):}
If the graph $(\mathcal{N},\mathcal{E})$ describing the network is radial, then the cycle constraints \eqref{Eq:CycleConstraint} are discarded, and the explicit solution $\psi(v,y)$ from Lemma \ref{Lem:ActiveMesh} reduces to
\begin{equation}\label{Eq:AcyclicSolution}
\bsin(A^{\sf T}\theta) = \psi(v) \triangleq [h(v)]^{-1} \mathsf{D}^{-1}p\,,
\end{equation}
where
\begin{equation}\label{Eq:BranchFlows}
p = (p_{\ell\ell},p_{g\ell},p_{gg})^{\sf T} \triangleq (A^{\sf T}A)^{-1}A^{\sf T}P\,
\end{equation}
are the \emph{unique} branch-wise active power flows, satisfying Kirchhoff's current law $P = Ap$. 
\end{corollary}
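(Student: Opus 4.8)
The plan is to specialize Lemma~\ref{Lem:ActiveMesh} to the radial case and then identify the pseudoinverse expression with the elementary branch-flow formula. First, observe that for a radial network the graph is acyclic, so by definition the cycle space $\mathrm{ker}(A)$ is trivial and $c = \dim(\mathrm{ker}(A)) = 0$. Consequently the matrix $C \in \real^{|\mathcal{E}|\times c}$ and the slack vector $y \in \real^{c}$ are both empty, the term $\mathsf{D}^{-1}Cy$ in \eqref{Eq:MeshSolution} vanishes, and the cycle constraint \eqref{Eq:CycleConstraint} is vacuous and may be discarded. The map \eqref{Eq:MeshSolution} therefore collapses to $\psi(v) = [h(v)]^{-1}A^{\sf T}{\sf L}^{\dagger}P$, and the recovery relation \eqref{Eq:Candidate} becomes $\bsin(A^{\sf T}\theta) = \psi(v)$.

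It then remains to show $A^{\sf T}{\sf L}^{\dagger}P = \mathsf{D}^{-1}p$ with $p = (A^{\sf T}A)^{-1}A^{\sf T}P$. I would first note that since the network is connected and acyclic, it is a tree, so $A$ has full column rank (equivalently $\mathrm{ker}(A) = \{\vzeros\}$); hence $A^{\sf T}A$ is invertible and $p$ is well-defined. Moreover $p$ is the unique vector satisfying Kirchhoff's current law $P = Ap$, which exists because the Balance Assumption~\ref{Ass:Balance} places $P \in \vones[n+m]^{\perp} = \mathrm{im}(A)$. Next I would invoke the fact, established in the proof of Lemma~\ref{Lem:ActiveMesh}, that ${\sf L}{\sf L}^{\dagger} = \Pi$ is the orthogonal projection onto $\vones[n+m]^{\perp}$; since $P \in \vones[n+m]^{\perp}$ this yields ${\sf L}{\sf L}^{\dagger}P = P$.

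The identification then follows by a short argument. Setting $w \triangleq A^{\sf T}{\sf L}^{\dagger}P$ and expanding ${\sf L} = A\mathsf{D}A^{\sf T}$, the equality ${\sf L}{\sf L}^{\dagger}P = P = Ap$ reads $A\mathsf{D}w = Ap$, i.e. $A(\mathsf{D}w - p) = \vzeros[n+m]$. Since $\mathrm{ker}(A) = \{\vzeros\}$ in the radial case, we conclude $\mathsf{D}w = p$, and because $\mathsf{D}$ is diagonal positive definite this gives $w = \mathsf{D}^{-1}p$, which is exactly $A^{\sf T}{\sf L}^{\dagger}P = \mathsf{D}^{-1}p$. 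Substituting into the collapsed map produces $\psi(v) = [h(v)]^{-1}\mathsf{D}^{-1}p$, as claimed.

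I do not expect a serious obstacle here; the only subtlety is that ${\sf L}^{\dagger}$ is a pseudoinverse rather than a true inverse, so one cannot cancel it against ${\sf L}$ naively. The argument is kept clean precisely by invoking the Balance Assumption to guarantee $P \in \mathrm{im}(A)$, which removes the projection $\Pi$, together with the triviality of $\mathrm{ker}(A)$ for trees, which is what permits passing from $A(\mathsf{D}w - p) = \vzeros$ to $\mathsf{D}w = p$.
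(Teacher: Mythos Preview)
Your proposal is correct and follows essentially the same route as the paper: discard the cycle constraint since $c=0$, then reduce the problem to showing $A^{\sf T}{\sf L}^{\dagger}P = \mathsf{D}^{-1}p$ via the computation $A\mathsf{D}(A^{\sf T}{\sf L}^{\dagger}P) = {\sf L}{\sf L}^{\dagger}P = \Pi P = P = Ap$ together with injectivity of $A$. The only cosmetic difference is that the paper phrases the final cancellation by left-multiplying with the left-inverse $(A^{\sf T}A)^{-1}A^{\sf T}$, whereas you invoke $\mathrm{ker}(A)=\{\vzeros\}$ directly; these are equivalent.
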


\begin{proof}.
That the cycle constraints \eqref{Eq:CycleConstraint} can be discarded is clear, as there are no cycles by assumption, so we may set $Cy = \vzeros[|\mathcal{E}|]$ in \eqref{Eq:MeshSolution}. Comparing \eqref{Eq:MeshSolution} and \eqref{Eq:AcyclicSolution} then, we need only prove that $A^{\sf T}{\sf L}^{\dagger}P = \mathsf{D}^{-1}p$. To do so, set $Z \triangleq A^{\sf T}{\sf L}^{\dagger}P$, and note that
$$
A\mathsf{D}Z = A\mathsf{D}A^{\sf T}{\sf L}^{\dagger}P = {\sf L}{\sf L}^{\dagger}P = \Pi P = P\,.
$$
Since $\mathrm{ker}(A) = \emptyset$, we may left-multiply by the left-inverse $(A^{\sf T}A)^{-1}A^{\sf T}$ of $A$ and insert \eqref{Eq:BranchFlows} to obtain
$$
\mathsf{D}Z = p \quad \Leftrightarrow \quad Z = \mathsf{D}^{-1}p\,,
$$
which shows the result. \end{proof}
\smallskip

For later use, we use $h(v)$ from \eqref{Eq:VVectorForm} to expand out the acyclic solution \eqref{Eq:AcyclicSolution} in terms of the branch partitions as
\begin{equation}\label{Eq:ActiveDeveloped}
\begin{pmatrix}
\bsin(\eta_{\ell\ell})\\
\bsin(\eta_{g\ell})\\
\bsin(\eta_{gg})
\end{pmatrix}
=
\begin{pmatrix}
[h_{\ell\ell}(v)]^{-1}\mathsf{D}_{\ell\ell}^{-1}p_{\ell\ell} \\ 
[h_{g\ell}(v)]^{-1}\mathsf{D}_{g\ell}^{-1}p_{g\ell} \\ 
\mathsf{D}_{gg}^{-1}p_{gg} 
\end{pmatrix}\,.
\end{equation}
where
\begin{equation}\label{Eq:eta}
\eta \triangleq (\eta_{\ell\ell},\eta_{g\ell},\eta_{gg})^{\sf T} = A^{\sf T}\theta\,
\end{equation}
are the branch-wise phase angle differences.

% -------------------------------------------------------------------------------------- %
% -------------------------------------------------------------------------------------- %
\subsection{Reformulation Step 2: Reactive Power Flow}
\label{Sec:Step2}
% -------------------------------------------------------------------------------------- %
% -------------------------------------------------------------------------------------- %

It is convenient to define an ``unoriented'' version of the incidence matrix $A$, denoted by $|A| \in \real^{(n+m)\times|\mathcal{E}|}$, with all non-zero elements set to $+1$. 
From \eqref{Eq:IncidencePlusMinus}, it is clear that $|A| = A(+)+A(-)$. As we did for the incidence matrix \eqref{Eq:Incidence}, we partition $|A|$ according to bus and branch types as
\begin{align}\label{Eq:IncidenceAbs}
|A| &= \begin{pmatrix}|A|_L \\ |A|_G\end{pmatrix} = 
\begin{pmatrix}
|A|_{L}^{\ell\ell} & |A|_L^{g\ell} & \vzeros[]\\
\vzeros[] & |A|_G^{g\ell} & |A|^{gg}_G
\end{pmatrix}\,.
\end{align}
We now focus on the reactive power flow equations \eqref{Eq:Reactive}. Using Lemma \ref{Lem:EquivFormulas},  \eqref{Eq:Reactive} can be written in vector form as
\begin{equation}\label{Eq:QManip1}
\begin{aligned}
Q_L &= -[V_L][B_{ii}]_{i\in\mathcal{N}_L}V_L\\ &\quad - |A|_L[V_iV_jB_{ij}]_{(i,j)\in\mathcal{E}}\,\bcos(A^{\sf T}\theta)\,.
\end{aligned}
\end{equation}
Adding and subtracting $|A|_L[V_iV_jB_{ij}]_{(i,j)\in\mathcal{E}}\vones[|\mathcal{E}|]$ from the right-hand side, \eqref{Eq:QManip1} becomes
\begin{align*}
Q_L &= -[V_L][B_{ii}]_{i\in\mathcal{N}_L}V_L - |A|_L[V_iV_jB_{ij}]_{(i,j)\in\mathcal{E}}\vones[|\mathcal{E}|]\\& + |A|_L[V_iV_jB_{ij}]_{(i,j)\in\mathcal{E}}(\vones[|\mathcal{E}|]-\bcos(A^{\sf T}\theta))\,.
\end{align*}
Applying Lemma \ref{Lem:EquivFormulas} (i)--(iii) to the first two terms, we obtain
\begin{align}
\begin{aligned}\nonumber
Q_L &= - [V_L]\left(B_{LL} V_L +B_{LG} V_G\right)\\ &\quad + |A|_L[V_iV_jB_{ij}]_{(i,j)\in\mathcal{E}}(\vones[|\mathcal{E}|]-\bcos(A^{\sf T}\theta))
\end{aligned}\\
\begin{aligned}\label{Eq:ReactiveLOL}
&= - [V_L]B_{LL}(V_L-V_L^*)\\ &\quad + |A|_L[V_iV_jB_{ij}]_{(i,j)\in\mathcal{E}}(\vones[|\mathcal{E}|]-\bcos(A^{\sf T}\theta))\,.
\end{aligned}
\end{align}
Working on the first term in \eqref{Eq:ReactiveLOL}, by substituting for $V_L = [V_L^*]v$ from the coordinate change \eqref{Eq:x} and simplifying using the nodal stiffness matrix $\mathsf{S}$ from \eqref{Eq:Qcrit}, we find that
$$
[V_L]B_{LL}(V_L-V_L^*) = 4[v]\mathsf{S}(v-\vones[n])\,.
$$
{\tb For the second term in \eqref{Eq:ReactiveLOL}, again introduce $\eta = A^{\sf T}\theta$ and insert the formula \eqref{Eq:DMatrix5} for $[V_iV_jB_{ij}]_{(i,j)\in\mathcal{E}}$ to obtain
\begin{equation}\label{Eq:ReactiveDeveloped}
\begin{aligned}
Q_L &= -4[v]\mathsf{S}(v-\vones[n])\\ &\quad + |A|_L \mathsf{D}\, [h(v)](\vones[|\mathcal{E}|]-\bcos(\eta))\,,
\end{aligned}
\end{equation}
which is the main result of Step 2. The following lemma summarizes our reformulations.

\smallskip

\begin{lemma}[\bf Reactive Power Reformulation]\label{Lem:ReactivePowerReformulation}
Consider the reactive power flow equation \eqref{Eq:Reactive}. The following two statements are equivalent:
\begin{enumerate}\setlength{\itemsep}{1.5pt}
\item[(i)] $(\theta,V_L) \in \Theta(\pi/2) \times \real^{n}_{>0}$ is a solution of \eqref{Eq:Reactive};
\item[(ii)] $(\eta,v) \in [-\frac{\pi}{2},\frac{\pi}{2}]^{|\mathcal{E}|} \times \real_{>0}^n$ is a solution of \eqref{Eq:ReactiveDeveloped}\,.
\end{enumerate}
\end{lemma}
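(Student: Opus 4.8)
The plan is to recognize that the algebraic content of the lemma is already contained in the displayed derivation \eqref{Eq:QManip1}--\eqref{Eq:ReactiveDeveloped}; what genuinely needs to be proved is (a) that this chain of manipulations is a sequence of \emph{equivalences} rather than one-directional implications, and (b) that the two stated domains correspond to one another under the associated change of variables. I would therefore structure the proof around the map $(\theta,V_L)\mapsto(\eta,v)$ defined by $\eta = A^{\sf T}\theta$ and $v = [V_L^*]^{-1}V_L$, and argue that \eqref{Eq:Reactive} holds for $(\theta,V_L)$ precisely when \eqref{Eq:ReactiveDeveloped} holds for the corresponding $(\eta,v)$.

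First I would verify the domain correspondence. The coordinate change \eqref{Eq:x} is a bijection of $\real^n_{>0}$ onto itself, since every component of $V_L^*$ is strictly positive by Proposition \ref{Prop:VLstar}, so $V_L \in \real^n_{>0}$ if and only if $v \in \real^n_{>0}$. For the angles, if $\theta \in \Theta(\pi/2)$ then by definition every branch difference satisfies $|\theta_i-\theta_j|\le \pi/2$, whence $\eta = A^{\sf T}\theta \in [-\frac{\pi}{2},\frac{\pi}{2}]^{|\mathcal{E}|}$; and since \eqref{Eq:Reactive} depends on the angles only through the branch differences, entering through the even function $\bcos(\eta)$, the variable $\eta$ carries all the angular information the reactive equation sees. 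This is also the structural reason no cycle/modulo bookkeeping (as in Lemma \ref{Lem:ActiveMesh}) is needed here.

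Next I would run through the derivation and certify reversibility at each step: the scalar-to-vector rewriting \eqref{Eq:QManip1} via Lemma \ref{Lem:EquivFormulas} is an exact identity; the addition and subtraction of $|A|_L[V_iV_jB_{ij}]_{(i,j)\in\mathcal{E}}\vones[|\mathcal{E}|]$ is trivial; the collapse of the first two terms to $-[V_L]B_{LL}(V_L-V_L^*)$ uses Lemma \ref{Lem:EquivFormulas}(i)--(iii) together with the defining relation $B_{LG}V_G=-B_{LL}V_L^*$ from \eqref{Eq:VLstar}; and the final pass applies \eqref{Eq:DMatrix5} and $\eta = A^{\sf T}\theta$ to the second term. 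Each operation is an algebraic identity valid for all admissible $(\theta,V_L)$ irrespective of whether \eqref{Eq:Reactive} holds, hence reversible, so no solutions are gained or lost. The only line of genuine computation I would spell out is the first term: since diagonal matrices commute and $[V_L]=[v][V_L^*]$, one has $[V_L]B_{LL}(V_L-V_L^*)=[v][V_L^*]B_{LL}[V_L^*](v-\vones[n])=4[v]\mathsf{S}(v-\vones[n])$ directly from \eqref{Eq:Qcrit}. I expect the main obstacle to be purely a matter of bookkeeping rather than mathematics, namely stating the substitution $\eta = A^{\sf T}\theta$ precisely enough that the reverse implication is clean — in particular, reading $\eta$ in (ii) as $A^{\sf T}\theta$ for the same $\theta$ so that the factorization \eqref{Eq:DMatrix5}, derived under exactly that identification, can be applied verbatim in both directions.
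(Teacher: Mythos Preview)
Your proposal is correct and follows essentially the same approach as the paper: the paper offers no separate proof for this lemma, treating the derivation \eqref{Eq:QManip1}--\eqref{Eq:ReactiveDeveloped} itself as the proof and stating the lemma merely as a summary. Your contribution is to make explicit the reversibility of each step and the domain correspondence under $(\theta,V_L)\mapsto(A^{\sf T}\theta,[V_L^*]^{-1}V_L)$, which the paper leaves implicit.
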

}

\smallskip

% -------------------------------------------------------------------------------------- %
% -------------------------------------------------------------------------------------- %
\subsection{Reformulation Step 3: Eliminate the Phase Angles}
\label{Sec:Step3}
% -------------------------------------------------------------------------------------- %
% -------------------------------------------------------------------------------------- %

{\tb 
We now combine the two reformulations \eqref{Eq:Candidate} and \eqref{Eq:ReactiveDeveloped}.
To begin, we apply $\sin^2 \eta_{ij} + \cos^2 \eta_{ij} = 1$ component-wise to \eqref{Eq:Candidate} and to obtain
\begin{equation}\label{Eq:cos-meshed}
\bcos(\eta) = \sqrt{\vones[|\mathcal{E}|]-[\psi(v,y)]\psi(v,y)}\,,
\end{equation}
where we have selected the positive square root for all components, as we are interested in phase angle vectors $\theta \in \Theta(\pi/2)$. Next, left-multiply both sides of \eqref{Eq:ReactiveDeveloped} by $-\frac{1}{4}\mathsf{S}^{-1}[v]^{-1}$ and rearrange to obtain
\begin{equation}\label{Eq:ReactiveDeveloped2}
\begin{aligned}
v &= \vones[n] - \frac{1}{4}\mathsf{S}^{-1}[Q_L][v]^{-1}\vones[n]\\
&\quad +\frac{1}{4}\mathsf{S}^{-1}[v]^{-1}|A|_L\mathsf{D}\,[h(v)]\,(\vones[|\mathcal{E}|]-\bcos(\eta))\,.
\end{aligned}
\end{equation}
Substituting \eqref{Eq:cos-meshed} into \eqref{Eq:ReactiveDeveloped2}, we may combine Lemma \ref{Lem:ActiveMesh} and Lemma \ref{Lem:ReactivePowerReformulation} to obtain our most general main modeling result.
}

\smallskip

\begin{theorem}\label{Thm:FPPFMeshed}\textbf{(Fixed-Point Power Flow for Meshed Networks):}
Consider the coupled power flow equations \eqref{Eq:Active}--\eqref{Eq:Reactive}. The following two statements are equivalent:
\begin{enumerate}\setlength{\itemsep}{1.5pt}
\item[(i)] $(\theta,V_L) \in \Theta(\pi/2) \times \real^{n}_{>0}$ is a solution of \eqref{Eq:Active}--\eqref{Eq:Reactive};
\item[(ii)] $(v,y) \in \real_{>0}^n \times \real^{c}$ is a solution of the fixed-point power flow
\begin{subequations}\label{Eq:FPPFMeshed}
\begin{align}\label{Eq:FPPFMeshed1}
\begin{aligned}
v &= f_{\rm mesh}(v,y) \triangleq \vones[n] - \frac{1}{4}\mathsf{S}^{-1}[Q_L][v]^{-1}\vones[n]\\
&\quad +\frac{1}{4}\mathsf{S}^{-1}[v]^{-1}|A|_L\mathsf{D}\,[h(v)]\,u(v,y)\,,\\
\end{aligned}\\
\begin{aligned}\label{Eq:FPPFMeshed2}
\vzeros[c] &= C^{\sf T}\barcsin(\psi(v,y))\, \qquad {\tb (\boldsymbol{\mathrm{mod}}\,2\pi)}\,.
\end{aligned}
\end{align}
\end{subequations}
where
\begin{subequations}
\begin{align}\label{Eq:uxy}
u(v,y) &\triangleq \vones[|\mathcal{E}|] - \sqrt{\vones[|\mathcal{E}|]-[\psi(v,y)]\psi(v,y)}\,\\
\label{Eq:psixy}
\psi(v,y) &= [h(v)]^{-1}\left(A^{\sf T}{\sf L}^{\dagger}P + \mathsf{D}^{-1}Cy\right)\,,
\end{align}
\end{subequations}
where $h(v)$ is as in \eqref{Eq:VVectorForm}, with the angular differences $\eta = A^{\sf T}\theta\,\, \tb (\boldsymbol{\mathrm{mod}}\,2\pi) \in [-\frac{\pi}{2},\frac{\pi}{2}]^{|\mathcal{E}|}$ recovered via $\eta = \barcsin(\psi(v,y))$\,.
\end{enumerate}
\end{theorem}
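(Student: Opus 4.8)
The plan is to assemble the theorem by gluing together the two reformulation lemmas, Lemma~\ref{Lem:ActiveMesh} and Lemma~\ref{Lem:ReactivePowerReformulation}, and then carrying out the single algebraic elimination of the phase angles that separates them from the final fixed-point form. By definition, a pair $(\theta,V_L)\in\Theta(\pi/2)\times\real_{>0}^n$ solves the coupled system \eqref{Eq:Active}--\eqref{Eq:Reactive} if and only if it solves the active equation \eqref{Eq:Active} and the reactive equation \eqref{Eq:Reactive} simultaneously. I would therefore invoke Lemma~\ref{Lem:ActiveMesh} to replace satisfaction of \eqref{Eq:Active} by the existence of a slack vector $y\in\real^c$ such that the cycle constraint \eqref{Eq:CycleConstraint} holds and the recovery identity $\bsin(\eta)=\psi(v,y)$ holds with $\eta=A^\transpose\theta$ and $v=[V_L^*]^{-1}V_L$; and I would invoke Lemma~\ref{Lem:ReactivePowerReformulation} to replace satisfaction of \eqref{Eq:Reactive} by satisfaction of \eqref{Eq:ReactiveDeveloped} in the variables $(\eta,v)$. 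The cycle constraint \eqref{Eq:CycleConstraint} is already verbatim the second fixed-point equation \eqref{Eq:FPPFMeshed2}, so the only remaining work is to transform \eqref{Eq:ReactiveDeveloped} into \eqref{Eq:FPPFMeshed1} by using the active-power recovery identity to eliminate $\eta$.

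The key step is the elimination of $\bcos(\eta)$. Because $\theta\in\Theta(\pi/2)$ forces each branch difference $\eta_{ij}\in[-\tfrac{\pi}{2},\tfrac{\pi}{2}]$, we have $\cos\eta_{ij}\ge 0$, and the componentwise Pythagorean identity applied to $\bsin(\eta)=\psi(v,y)$ yields exactly the positive-root formula \eqref{Eq:cos-meshed}, namely $\bcos(\eta)=\sqrt{\vones[|\mathcal{E}|]-[\psi(v,y)]\psi(v,y)}$. I would then left-multiply the reactive identity \eqref{Eq:ReactiveDeveloped} by $-\tfrac{1}{4}\mathsf{S}^{-1}[v]^{-1}$ (legitimate since $\mathsf{S}$ is negative definite by Definition~\ref{Def:Stiffness} and $v>0$), using that $\mathsf{S}^{-1}[v]^{-1}[v]\mathsf{S}=I_n$ to collapse the first term, to arrive at \eqref{Eq:ReactiveDeveloped2}. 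Substituting \eqref{Eq:cos-meshed} and recognizing $\vones[|\mathcal{E}|]-\bcos(\eta)=u(v,y)$ from \eqref{Eq:uxy} turns \eqref{Eq:ReactiveDeveloped2} into the fixed-point equation \eqref{Eq:FPPFMeshed1}. Together with \eqref{Eq:FPPFMeshed2} this shows that (i) implies (ii).

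For the converse direction, I would start from a solution $(v,y)$ of \eqref{Eq:FPPFMeshed1}--\eqref{Eq:FPPFMeshed2}, set $\eta=\barcsin(\psi(v,y))\in[-\tfrac{\pi}{2},\tfrac{\pi}{2}]^{|\mathcal{E}|}$, and use the reverse implication of Lemma~\ref{Lem:ActiveMesh} — which relies on the total unimodularity of $C$ to integrate the cycle constraint into a genuine angle vector $\theta\in\mathbb{T}^{n+m}$ with $A^\transpose\theta=\eta\pmod{2\pi}$ — to reconstruct $\theta$ satisfying \eqref{Eq:Active}. For this $\eta$ the positive-root relation $\bcos(\eta)=\sqrt{\vones[|\mathcal{E}|]-[\psi]\psi}$ holds again, so reversing the multiplication by $-\tfrac14\mathsf{S}^{-1}[v]^{-1}$ turns \eqref{Eq:FPPFMeshed1} back into \eqref{Eq:ReactiveDeveloped}, and Lemma~\ref{Lem:ReactivePowerReformulation} then certifies \eqref{Eq:Reactive}. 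Setting $V_L=[V_L^*]v$ recovers a solution of the coupled system, establishing that (ii) implies (i).

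I expect the only genuine subtlety — rather than an obstacle — to be the bookkeeping of the domain restriction $\Theta(\pi/2)$: it is precisely this restriction that singles out the positive branch of the square root in \eqref{Eq:cos-meshed} and that makes $\barcsin$ a well-defined inverse of $\bsin$ on $[-\tfrac{\pi}{2},\tfrac{\pi}{2}]$, so the equivalence must be stated and propagated with that domain carried consistently through both lemmas and through the modular angle recovery. All the heavy lifting — the loop-flow/modular argument and the total-unimodularity reconstruction of $\theta$ — has already been discharged inside Lemma~\ref{Lem:ActiveMesh}, so the theorem itself reduces to the deterministic substitution above.
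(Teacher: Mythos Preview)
Your proposal is correct and follows essentially the same route as the paper: the theorem is obtained by combining Lemma~\ref{Lem:ActiveMesh} and Lemma~\ref{Lem:ReactivePowerReformulation}, eliminating $\bcos(\eta)$ via the Pythagorean identity \eqref{Eq:cos-meshed} (with the positive root selected because $\theta\in\Theta(\pi/2)$), and left-multiplying \eqref{Eq:ReactiveDeveloped} by $-\tfrac{1}{4}\mathsf{S}^{-1}[v]^{-1}$ to reach \eqref{Eq:FPPFMeshed1}. If anything, you are more explicit than the paper in spelling out the converse direction and the role of the domain restriction $\Theta(\pi/2)$.
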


\smallskip
\smallskip

The model \eqref{Eq:FPPFMeshed1}--\eqref{Eq:FPPFMeshed2} depends only on the scaled \PQ bus voltage magnitudes $v \in \real^n_{>0}$ and on the slack variables $y \in \real^c$ which enforce the cycle constraints arising from Kirchhoff's voltage law. Phase angles are completely absent, and are recovered as ``outputs'' by applying $\arcsin(\cdot)$ component-wise to \eqref{Eq:psixy}. The terminology fixed-point power flow comes from the special form that the model takes for radial networks.

\smallskip

\begin{corollary}\label{Cor:FPPFAcyclic}\textbf{(Fixed-Point Power Flow for Radial Networks I):}
Consider the coupled power flow equations \eqref{Eq:Active}--\eqref{Eq:Reactive} and assume that the graph $(\mathcal{N},\mathcal{E})$ describing the network is radial. The following two statements are equivalent:
\begin{enumerate}\setlength{\itemsep}{1.5pt}
\item[(i)] $(\theta,V_L) \in \Theta(\pi/2) \times \real^{n}_{>0}$ is a solution of \eqref{Eq:Active}--\eqref{Eq:Reactive};
\item[(ii)] $v \in \real^n_{>0}$ is a fixed point\footnote{A fixed point of $f_{\rm radial}$ is a point $v \in \real^n_{>0}$ satisfying $f_{\rm radial}(v) = v$.} of the mapping $\map{f_{\rm radial}}{\real^n_{>0}}{\real^n}$ defined by
\begin{equation}\label{Eq:f}
\begin{aligned}
f_{\rm radial}(v) &\triangleq \vones[n]-\frac{1}{4}\mathsf{S}^{-1}[Q_L][v]^{-1}\vones[n]\\ &\quad + \frac{1}{4}\mathsf{S}^{-1}[v]^{-1}|A|_L\mathsf{D}\,[h(v)]\,u(v)\,,
\end{aligned}
\end{equation}
where
\begin{subequations}
\begin{align}\label{Eq:ux}
u(v) &\triangleq \vones[|\mathcal{E}|] - \sqrt{\vones[|\mathcal{E}|]-[\psi(v)]\psi(v)}\,\\
\label{Eq:psix}
\psi(v) &= [h(v)]^{-1}\mathsf{D}^{-1}p\,,
\end{align}
\end{subequations}
with $h(v)$ as in \eqref{Eq:VVectorForm} and the angular differences $\eta = A^{\sf T}\theta \in [-\frac{\pi}{2},\frac{\pi}{2}]^{|\mathcal{E}|}$ recovered via $\eta = \barcsin(\psi(v))$\,.
\end{enumerate}
\end{corollary}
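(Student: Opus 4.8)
The plan is to obtain this corollary as the $c = 0$ specialization of Theorem \ref{Thm:FPPFMeshed}, using Corollary \ref{Cor:AcyclicActive} to identify the simplified form of the map $\psi$. Since the graph $(\mathcal{N},\mathcal{E})$ is radial, its cycle space is trivial, so $c = \mathrm{dim}(\mathrm{ker}(A)) = 0$, there is no slack variable $y \in \real^c$, and the matrix $C$ is empty. Consequently the cycle-constraint equation \eqref{Eq:FPPFMeshed2} becomes vacuous and may be discarded, leaving only the fixed-point equation \eqref{Eq:FPPFMeshed1}.

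First I would pass from the meshed $\psi(v,y)$ to the radial $\psi(v)$. Setting $Cy = \vzeros[|\mathcal{E}|]$ in \eqref{Eq:psixy} gives $\psi(v) = [h(v)]^{-1}A^{\sf T}{\sf L}^{\dagger}P$, and Corollary \ref{Cor:AcyclicActive} supplies the key identity $A^{\sf T}{\sf L}^{\dagger}P = \mathsf{D}^{-1}p$, where $p = (A^{\sf T}A)^{-1}A^{\sf T}P$ are the unique branch flows satisfying $P = Ap$. Substituting yields exactly $\psi(v) = [h(v)]^{-1}\mathsf{D}^{-1}p$, matching \eqref{Eq:psix}. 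With $\psi$ now depending on $v$ alone, the auxiliary quantity $u(v,y)$ in \eqref{Eq:uxy} reduces to $u(v)$ in \eqref{Eq:ux}, and $f_{\rm mesh}(v,y)$ collapses to $f_{\rm radial}(v)$ in \eqref{Eq:f}.

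The remaining equivalence then follows essentially verbatim from the derivation preceding Theorem \ref{Thm:FPPFMeshed}. Concretely, I would invoke Corollary \ref{Cor:AcyclicActive} (in place of Lemma \ref{Lem:ActiveMesh}) for the active equations and Lemma \ref{Lem:ReactivePowerReformulation} for the reactive equations, then combine them exactly as in Step 3: apply $\sin^2 + \cos^2 = 1$ componentwise to \eqref{Eq:AcyclicSolution} to obtain $\bcos(\eta) = \sqrt{\vones[|\mathcal{E}|] - [\psi(v)]\psi(v)}$, choosing the positive root because $\theta \in \Theta(\pi/2)$ forces $\eta \in [-\frac{\pi}{2},\frac{\pi}{2}]^{|\mathcal{E}|}$; then left-multiply the rearranged reactive equation \eqref{Eq:ReactiveDeveloped} by $-\frac{1}{4}\mathsf{S}^{-1}[v]^{-1}$ and substitute. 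Chaining the two equivalences gives (i)$\Leftrightarrow$(ii) in both directions.

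The only genuinely new point — and where I expect the subtlety to lie — is the disappearance of the $(\boldsymbol{\mathrm{mod}}\,2\pi)$ qualifier in the angle recovery. For a connected tree, $\mathrm{rank}(A) = |\mathcal{E}| = n+m-1$, so $A^{\sf T}$ maps onto $\real^{|\mathcal{E}|}$ and every prescribed vector $\eta = \barcsin(\psi(v)) \in [-\frac{\pi}{2},\frac{\pi}{2}]^{|\mathcal{E}|}$ is realizable as $\eta = A^{\sf T}\theta$, with $\theta$ unique up to a global phase shift since $\mathrm{ker}(A^{\sf T}) = \mathrm{im}(\vones[n+m])$. Because there are no independent cycles, there is no loop-flow ambiguity, and the integer-vector step used in the proof of Lemma \ref{Lem:ActiveMesh} becomes trivial; this is precisely why the modulo operation present in the meshed model is absent here. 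I would close by noting that this confirms the clean recovery $\eta = \barcsin(\psi(v))$ without any modular reduction, completing the equivalence.
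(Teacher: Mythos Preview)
Your proposal is correct and takes essentially the same approach as the paper: specialize Theorem~\ref{Thm:FPPFMeshed} to the radial case $c=0$, invoking Corollary~\ref{Cor:AcyclicActive} to reduce $\psi(v,y)$ to $\psi(v)=[h(v)]^{-1}\mathsf{D}^{-1}p$ and to discard the cycle constraint. The paper's proof is in fact just those three observations; your additional paragraphs re-tracing Step~3 and explaining why the $(\boldsymbol{\mathrm{mod}}\,2\pi)$ qualifier disappears are sound but already absorbed into Theorem~\ref{Thm:FPPFMeshed} and Corollary~\ref{Cor:AcyclicActive}, so they are not needed once you cite those results.
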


\begin{proof}.
By Corollary \ref{Cor:AcyclicActive}, $\psi(v,y)$ from \eqref{Eq:psixy} reduces to $\psi(v)$ in \eqref{Eq:psix}, and hence $u(v,y)$ from \eqref{Eq:uxy} reduces to $u(v)$ in \eqref{Eq:ux}. Again by Corollary \ref{Cor:AcyclicActive}, we may omit the cycle constraint \eqref{Eq:FPPFMeshed2}. It follows that $f_{\rm mesh}(v,y)$ as defined in \eqref{Eq:FPPFMeshed1} reduces to $f_{\rm radial}(v)$ as in \eqref{Eq:f}.
\end{proof}

\smallskip

The function $f_{\rm radial}(v)$ may be written in an alternative form which emphasizes the importance the branch partitioning $\mathcal{E} = \mathcal{E}^{\ell\ell} \cup \mathcal{E}^{g\ell} \cup \mathcal{E}^{gg}$. This form is less compact, but more explicit, and will be used for analysis purposes in Part II.

\smallskip

\begin{corollary}\label{Cor:FPPFAcyclicAlt}\textbf{(Fixed-Point Power Flow for Radial Networks II):}
With the notation $u(v) = (u_{\ell\ell}(v),u_{g\ell}(v),u_{gg}(v))^{\sf T}$, an equivalent expression for the fixed point map $f_{\rm radial}(v)$ in Corollary \ref{Cor:FPPFAcyclic} is
\begin{equation}\label{Eq:FixedPoint}
\begin{aligned}
f_{\rm radial}(v) &= \vones[n]-\frac{1}{4}\mathsf{S}^{-1}[Q_L][v]^{-1}\vones[n]\\
&+ \frac{1}{4}\mathsf{S}^{-1}|A|_L^{g\ell}\mathsf{D}_{g\ell}u_{g\ell}(v)\\
&+ \frac{1}{4}\mathsf{S}^{-1}A_L^{\ell\ell}(+)\,[A_L^{\ell\ell}(-)^{\sf T}v]\mathsf{D}_{\ell\ell}u_{\ell\ell}(v)\\
&+ \frac{1}{4}\mathsf{S}^{-1}A_L^{\ell\ell}(-)\,[A_L^{\ell\ell}(+)^{\sf T}v]\mathsf{D}_{\ell\ell}u_{\ell\ell}(v)\,.
\end{aligned}
\end{equation}
\end{corollary}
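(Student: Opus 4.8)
The plan is to start from the compact radial map of Corollary \ref{Cor:FPPFAcyclic} and rewrite only its last term, since the first two terms $\vones[n]$ and $-\frac{1}{4}\mathsf{S}^{-1}[Q_L][v]^{-1}\vones[n]$ already appear verbatim in \eqref{Eq:FixedPoint}. Thus the entire task reduces to showing that
\[
\frac{1}{4}\mathsf{S}^{-1}[v]^{-1}|A|_L\mathsf{D}\,[h(v)]\,u(v)
\]
equals the final three terms of \eqref{Eq:FixedPoint}. First I would exploit the block structure: by \eqref{Eq:IncidenceAbs}, \eqref{Eq:DMatrix2}, \eqref{Eq:VVectorForm}, and $u(v) = (u_{\ell\ell}(v),u_{g\ell}(v),u_{gg}(v))^{\sf T}$, the matrices $\mathsf{D}$ and $[h(v)]$ are block-diagonal while $|A|_L$ has the row block form with a zero $gg$ block. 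Hence the product splits into an $\ell\ell$ contribution and a $g\ell$ contribution, with the $gg$ piece vanishing because the corresponding block of $|A|_L$ is zero.

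The crux, which I expect to be the main (though not deep) obstacle, is the bookkeeping of how the diagonal scaling $[v]^{-1}$ interacts with the incidence pattern and with $h(v)$. For the $g\ell$ block, recall from \eqref{Eq:VComponentForm} that $h_e(v) = v_j$ for $e=(i,j)\in\Egl$ with load endpoint $j$, whereas $|A|_L^{g\ell}$ carries a single $+1$ in row $j$ of column $e$ (by the orientation convention, edges in $\Egl$ run generator-to-load). Reading off entries, the factor $1/v_j$ exactly cancels the $v_j$ supplied by $h_{g\ell}(v)$, giving the identity $[v]^{-1}|A|_L^{g\ell}[h_{g\ell}(v)] = |A|_L^{g\ell}$. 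Since $\mathsf{D}_{g\ell}$ and $[h_{g\ell}(v)]$ are diagonal and therefore commute, the $g\ell$ term collapses to $\frac{1}{4}\mathsf{S}^{-1}|A|_L^{g\ell}\mathsf{D}_{g\ell}u_{g\ell}(v)$, matching the second term of \eqref{Eq:FixedPoint}.

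For the $\ell\ell$ block I would split $|A|_L^{\ell\ell} = A_L^{\ell\ell}(+)+A_L^{\ell\ell}(-)$ and treat each half separately. Here $h_e(v)=v_iv_j$ for $e=(i,j)\in\Ell$; the factor $1/v_i$ attached to the sending-end row cancels $v_i$, leaving $v_j = (A_L^{\ell\ell}(-)^{\sf T}v)_e$, which yields $[v]^{-1}A_L^{\ell\ell}(+)[h_{\ell\ell}(v)] = A_L^{\ell\ell}(+)[A_L^{\ell\ell}(-)^{\sf T}v]$, and symmetrically $[v]^{-1}A_L^{\ell\ell}(-)[h_{\ell\ell}(v)] = A_L^{\ell\ell}(-)[A_L^{\ell\ell}(+)^{\sf T}v]$. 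Commuting the diagonal $\mathsf{D}_{\ell\ell}$ past the diagonal voltage factors, these two pieces become precisely the third and fourth terms of \eqref{Eq:FixedPoint}. The only real care needed is to verify the two entrywise cancellation identities against the edge orientation conventions; once these are established the corollary follows by direct substitution.
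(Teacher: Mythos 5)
Your proposal is correct and follows essentially the same route as the paper: the paper's proof also isolates the last term of \eqref{Eq:f}, splits it into its $\ell\ell$ and $g\ell$ blocks (the $gg$ block vanishing with the zero block of $|A|_L$), and reduces everything to the two identities $[v]^{-1}|A|_L^{g\ell}[h_{g\ell}(v)] = |A|_L^{g\ell}$ and $[v]^{-1}|A|_L^{\ell\ell}[h_{\ell\ell}(v)] = A_L^{\ell\ell}(+)[A_L^{\ell\ell}(-)^{\sf T}v] + A_L^{\ell\ell}(-)[A_L^{\ell\ell}(+)^{\sf T}v]$, which it states as Lemma \ref{Lem:Identities} and proves via Lemma \ref{Lem:IncidenceMatrixProperties2}. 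Your entrywise cancellation argument is exactly the content of those supporting lemmas, just inlined rather than stated separately.
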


\begin{proof}.
See appendix.
\end{proof}

% -------------------------------------------------------------------------------------- %
% -------------------------------------------------------------------------------------- %
\subsection{Discussion of FPPF Derivation}
\label{Sec:Discussion}
% -------------------------------------------------------------------------------------- %
% -------------------------------------------------------------------------------------- %

The fixed-point power flow model \eqref{Eq:FPPFMeshed} is parameterized by the stiffness matrices $\mathsf{D}, \mathsf{L}$, and $\mathsf{S}$ and the power demands $P$ and $Q_L$. As discussed in Section \ref{Sec:Stiffness}, the stiffness matrices encode both the topology of the network and the values of relevant parameters such as line susceptances and generator voltages. A particularly important observation is that the active and reactive power loads $P$ and $Q_L$ each enter the model multiplied by the inverse of a stiffness matrix ($A^{\sf T}{\sf L}^{\dagger}P$ in the first case, $\mathsf{S}^{-1}[Q_L]$ in the second case). The FPPF model reveals that these very specific combinations of network parameters and loads are the important quantities to focus on.
\smallskip

In the radial case, the map $f_{\rm radial}$ in \eqref{Eq:FixedPoint} consists of five distinct, easily interpretable terms:
\begin{itemize}
\item the first term is the open-circuit voltage profile of the network in the scaled variables $v_i = V_i/V_i^*$;
\item the second term is a reformulation of the decoupled reactive power flow equation; see \cite{jwsp-fd-fb:14c}.
\item the third term proportional to $u_{g\ell}(v)$ accounts for the influence of active power branch flows between \PV and \PQ buses on the voltage magnitudes at \PQ buses.
\item the fourth and fifth terms proportional to $u_{\ell\ell}(v)$ account for the influence of active power branch flows between \PQ buses on \PQ bus voltage magnitudes. There are two terms to account for this because a branch-wise flow of active power between two \PQ buses affects the voltage magnitude at both ends of the branch.
\end{itemize}

%While the terms $u_{g\ell}(v)$ and $u_{\ell\ell}(v)$ may appear quite similar, they are not, as one may note by examining the definitions of $h_{\ell\ell}(v)$ and $h_{g\ell}(v)$ in \eqref{Eq:VComponentForm}. 
A more detailed examination of the internal structure of these terms is deferred to Part II \cite{JWSP:17b}. Conceptually, the FPPF model is similar Baran-Wu branch flow model \cite{SHL:14a} in that phase angles are absent. In contrast to the branch flow model, where sending-end branch power flows and currents are state variables, the voltages $v$ are the only state variables in \eqref{Eq:FixedPoint}; this difference in complexity can perhaps be attributed to the lossless character of the network considered here.

{
% -------------------------------------------------------------------------------------- %
% -------------------------------------------------------------------------------------- %
\subsection{Approximate Solution to Lossless Power Flow}
\label{Sec:Linearization}
% -------------------------------------------------------------------------------------- %
% -------------------------------------------------------------------------------------- %

We now leverage the FPPF model to derive an explicit approximate solution to the power flow equations \eqref{Eq:Active}--\eqref{Eq:Reactive}. By construction, the FPPF model \eqref{Eq:FPPFMeshed} has the property that under open-circuit conditions (when $Q_L = \vzeros[n]$ and $P = \vzeros[n+m]$), it holds that $(v,y,\eta) = (\vones[n],\vzeros[c],\vzeros[|\mathcal{E}|])$ is a solution:
$$
f_{\rm mesh}(\vones[n],\vzeros[c]) = \vones[n] \quad \text{and} \quad \psi(\vones[n],\vzeros[c]) = \vzeros[|\mathcal{E}|]\,.
$$
This is the solution we desire in practice: a high-voltage solution ($v \approx \vones[n]$) {\tb with small angular differences ($\eta \approx \vzeros[|\mathcal{E}|]$)}. By Taylor expanding each side of \eqref{Eq:FPPFMeshed} around this solution, we can obtain an explicit expression for the ``linearized'' power flow solution $(\eta_{\rm lin},v_{\rm lin})$ to lowest order in both $Q_L$ and $P$. Expanding \eqref{Eq:FPPFMeshed2} around the open-circuit solution, to first order we find that
\begin{align*}
\vzeros[c] &= C^{\sf T}(A^{\sf T}{\sf L}^{\dagger}P + {\sf D}^{-1}Cy_{\rm lin})\,.
\end{align*}
By construction, $C$ is full column rank with $\mathrm{Im}(C) = \mathrm{ker}(A)$ (Section \ref{Sec:Step1}). It follows that $AC = \vzeros[]$ and that $C^{\sf T}{\sf D}^{-1}C$ is invertible. We therefore conclude that $y_{\rm lin} = \vzeros[c]$ is the approximate solution for the cyclic slack variables. While we omit the details, expanding both \eqref{Eq:FPPFMeshed1} and the expression $\eta = \barcsin(\psi)$ to lowest order in $P$ and $Q_L$ yields
%\begin{align*}
%v &\simeq \underbrace{f(\vones[n],\vzeros[c])}_{=\vones[n]} &+& \underbrace{\frac{\partial f}{\partial v}}_{=\vzeros[]}(v-\vones[n]) + \underbrace{\frac{\partial f}{\partial y}y}_{=\vzeros[]} + \frac{\partial f}{\partial Q_L}Q_L + \underbrace{\frac{\partial f}{\partial P}}_{=\vzeros[]}P\\
%& &
%\end{align*}
\begin{subequations}\label{Eq:LinearizedFPPF}
\begin{align}
\label{Eq:LinearizedFPPF1}
v_{\rm lin} &= \vones[n] - \frac{1}{4}{\sf S}^{-1}Q_L + \frac{1}{8}{\sf S}^{-1}|A|_L {\sf D}[A^{\sf T}{\sf L}^{\dagger}P]A^{\sf T}{\sf L}^{\dagger}P\\
\label{Eq:LinearizedFPPF2}
\eta_{\rm lin} &= A^{\sf T}\theta_{\rm lin} = A^{\sf T}{\sf L}^{\dagger}P\,.
\end{align}
\end{subequations}
The first two terms in \eqref{Eq:LinearizedFPPF1} are the approximate solution for decoupled reactive power flow, as obtained in \cite{BG-JWSP-FD-SZ-FB:13zb,jwsp-fd-fb:14c}. The third term is novel, and captures how \textemdash{} to lowest order \textemdash{} active power injections affect voltage magnitudes quadratically. The linearization \eqref{Eq:LinearizedFPPF2} is the standard DC Power Flow approximation \cite{BS-JJ-OA:09}. Taken together, \eqref{Eq:LinearizedFPPF} is an approximate solution to the lossless power flow equations. Intuitively, this approximation will be accurate when (i) loop flows of power are insignificant, and (ii) the quantities $\|{\sf S}^{-1}Q_L\|$ and $\|A^{\sf T}{\sf L}^{\dagger}P\|$ are both small.\footnote{{\tb In Part II we will show for radial networks that appropriately restricting these two quantities guarantees the existence of a power flow solution.}} We note that higher-order explicit approximations can be obtained by retaining higher-order terms in the Taylor expansion.

}

% -------------------------------------------------------------------------------------- %
% -------------------------------------------------------------------------------------- %
\section{Numerical Experiments}
\label{Sec:Simulations}
% -------------------------------------------------------------------------------------- %
% -------------------------------------------------------------------------------------- %

We now present simulation studies on standard test cases to test the computational performance of the FPPF model as well as the accuracy of the approximate solution \eqref{Eq:LinearizedFPPF}. Algorithm \ref{Alg:FPPF} describes the implementation of the FPPF used for the tests described here; many variations and computational refinements are of course possible. The basic approach is to iterate the mapping $v_{k+1} = f_{\rm mesh}(v_k,y_k)$ to update the scaled voltages $v$. For meshed networks the slack variables $y$ must also be updated. {\tb While many options are possible, we will use a Newton step}
{\tb
\begin{equation}\label{Eq:NewtonCycle}
J_{\mathrm{cycle},k}(y_{k+1}-y_k) = -C^{\sf T}\barcsin(\psi(v_k,y_k))\,,
\end{equation}
}
where
\begin{equation}\label{Eq:JacCycle}
J_{\mathrm{cycle},k} \triangleq C^{\sf T}\left(I_{|\mathcal{E}|}-[\psi_k]^2\right)^{-\frac{1}{2}}[h(v_k)]^{-1}\mathsf{D}^{-1}C
\end{equation}
is the Jacobian matrix of \eqref{Eq:FPPFMeshed2}.
After a desired relative tolerance $\epsilon$ is reached, the algorithm terminates.

\begin{algorithm}

\SetKwInput{KwData}{Inputs}
\SetKwInput{KwResult}{Outputs}

 \KwData{Power flow data, Iteration Tolerance $\epsilon$}
 \KwResult{Power flow solution $(A^{\sf T}\theta,V_L)$, cycle variable $y$}
 Construct $V_L^*, \mathsf{D},\mathsf{S},{\sf L}, C$\\
 Initialization: $v = \vones[n], y = \vzeros[c], \psi =\vzeros[|\mathcal{E}|]$\\
 \While{$\max\{\|v_{k}-v_{k-1}\|_{\infty}\,,\,\|\psi_k-\psi_{k-1}\|_{\infty}\} > \epsilon$}{
    $v \leftarrow f_{\rm mesh}(v,y)$\\
   \If{$c > 0$}{
	$y \leftarrow$ Newton Step on $C^{\sf T}\barcsin(\psi(v,y)) = \vzeros[c]$\\	
   }
   $\psi \leftarrow [h(v)]^{-1}\left(A^{\sf T}{\sf L}^{\dagger}P + \mathsf{D}^{-1}Cy\right)$\\
  }
  \KwRet{$V_L \leftarrow [V_L^*]\,v$, $A^{\sf T}\theta \leftarrow \barcsin(\psi)$, $y$}
 \caption{Fixed-Point Power Flow Iteration}
 \label{Alg:FPPF}
\end{algorithm}

{\tb 
\begin{remark}\label{Rem:Computational}\textbf{(Computational Remarks):}
Various combinations of constants such as $A^{\sf T}{\sf L}^{\dagger}P$, ${\sf D}^{-1}C$, and $|A|_L{\sf D}$ appear in the iterations of Algorithm \ref{Alg:FPPF}; these can be precomputed then stored for future use. For example, one may solve the sparse linear equation ${\sf L}z = P$, calculate $A^{\sf T}z$, then store the result. The iteration $v \leftarrow f_{\rm mesh}(v,y)$ should be computed by right-multiplying \eqref{Eq:FPPFMeshed1} by ${\sf S}$ and solving the corresponding sparse linear system. A factorization (e.g., Cholesky) of ${\sf S}$ can be stored, and each update will then require only one forward-backward substitution; other sparsity-exploiting techniques could also be applied. For the Newton step \eqref{Eq:NewtonCycle}, the Jacobian \eqref{Eq:JacCycle} is sparse and the variable portion $\left(I_{|\mathcal{E}|}-[\psi_k]^2\right)^{-\frac{1}{2}}[h(v_k)]^{-1}$ is diagonal. In summary, each iteration of Algorithm \ref{Alg:FPPF} requires the solution of an $n \times n$ system of sparse equations (with a constant coefficient matrix) and the solution of a $c \times c$ system of sparse equations for the slack Newton step. \hfill \oprocend
\end{remark}
}

{\tb
\begin{remark}\label{Rem:Compare}\textbf{(Dimensional Comparison To Standard Power Flow Methods):}
Classic Newton-Raphson or Fast Decoupled Load Flow implementations iterate on the $n+m$ phase angles and $n$ voltage magnitudes. In contrast, the Algorithm \ref{Alg:FPPF} iterates on the $n$ voltage magnitudes and $c$ slack variables. Typically $c < n+m$, and therefore the FPPF algorithm requires the solution of (sometimes substantially) smaller systems of linear equations than these implementations. For example, the 39 bus New England system has $c = 8$, the 2383 Polish system has $c = 514$, and the 9241 PEGASE system has $c = 6809$. We also note that many different solution techniques (including Newton's method) could be applied to the system of $n+c$ nonlinear equations \eqref{Eq:FPPFMeshed}. Algorithm \ref{Alg:FPPF} is just one option, but is well-motivated by our development in Part II. \hfill \oprocend
\end{remark}
}

We apply Algorithm \ref{Alg:FPPF} and the approximate solution \eqref{Eq:LinearizedFPPF} to the standard MATPOWER test cases \cite{RDZ-CEM-DG:11,CZ-SF-JM-PP:16}. In all cases, branch and shunt conductances were set to zero, in line with our main theoretical assumption.\footnote{{\tb For context on this assumption, the mean branch $R/X$ ratios of the networks in Table \ref{Tab:Iterations1} are approximately 0.3, 0.2, 0.4, 0.15, 0.4, 0.1, 0.3, 0.2, 0.1, 0.3, 0.2, and 0.2. The mean differences between the voltage solutions computed with and without resistances are 0.007, 0.006, 0.010, 0.004, 0.024, 0.006, 0.003, 0.010, 0.005, 0.005, 0.005, and 0.007 per unit, respectively.}} The cycle-space matrix $C \in \{-1,0,1\}^{|\mathcal{E}|\times c}$ was generated using $\texttt{C = null(A,"r")}$ in MATLAB. Simulation results are shown in Table \ref{Tab:Iterations1}. The second column shows the number of iterations required by Algorithm \ref{Alg:FPPF} to reach a relative solution tolerance of $\epsilon = 0.001$ {\tb p.u. on the voltage magnitudes}. As can be seen, the iterations converge to the high-voltage solution quickly. The number of iterations is essentially independent of system size, and in all 
%\footnote{For the PEGASE 13,659 model, MATPOWER's Newton solver diverges when branch conductances are removed. This model is already more than 90\% of the way to insolvability, so high-load testing is omitted.} 
cases the solution agrees with the one determined by MATPOWER. Columns 3 and 4 show the max and mean prediction errors
$$
\delta_{\rm max} = \|V_L - [V_L^*]v_{\rm lin}\|_{\infty}\,,\quad \delta_{\rm avg} = \frac{1}{n}\|V_L - [V_L^*]v_{\rm lin}\|_{1}\,
$$ 
between the exact voltage magnitude solution $V_L$ and the approximate solution determined by \eqref{Eq:LinearizedFPPF}, in per unit. In most of the cases the maximum error is quite small, with larger test cases showing larger maximum errors. Across all cases however, the mean error is consistently quite small, indicating a good overall approximation.

\begin{table}[h!]
\begin{center}
\caption{Testing of Fixed-Point Power Flow}
{\renewcommand{\arraystretch}{1}
%\begin{tabular}{l|cc|cc}
\begin{tabular}{@{\extracolsep{4pt}}|l|c|c|c|c|c|@{}}
\toprule
%& \multicolumn{2}{*}{Base Load} & \multicolumn{2}{*}{High Load}\\
& \multicolumn{3}{c|}{Base Load} & \multicolumn{2}{c|}{High Load}\\
\midrule
%& \multicolumn{2}{c}{\midrule} & \multicolumn{2}{c}{\midrule}\\
%\cline{2-3} \cline{4-5}
\multirow{2}{*}{Test Case} & FPPF & $\delta_{\rm max}$ & $\delta_{\rm avg}$ & FPPF & $\delta_{\rm max}$\\
&  Iters. & (p.u.) & (p.u.) &  Iters. & (p.u.)\\
\midrule
%9 bus system 			& 4		&	0.005 	&	0.003 	&	9 	&	0.084\\
14 bus system 			& 4		&	0.001 	&	0.000 	& 	8	&	0.090\\
RTS 24 					& 4		&	0.003 	&	0.001 	&	8	&	0.081\\
30 bus system 			& 4		&	0.003 	&	0.002 	& 	8	&	0.104\\
New England 39 			& 4		&	0.006 	&	0.004 	& 	8	&	0.086\\
%RTS '96 (2 area) 		& 4		&	0.003 	&	0.001 	& 	8	&	0.083\\
57 bus system 			& 5		&	0.011 	&	0.003 	& 	8	&	0.118\\
RTS '96 (3 area)		& 4		&	0.003 	&	0.001 	& 	8	&	0.084\\
118 bus system			& 3		&	0.001 	&	0.000 	& 	7	&	0.054\\
300 bus system 			& 6		&	0.022 	&	0.004 	& 	8	&	0.059\\
PEGASE 1,354 			& 5		&	0.011	&	0.001 	&   8	&	0.070\\	
Polish 2,383 wp		 	& 4		& 	0.003	&	0.000 	& 	8	&	0.078\\
PEGASE 2,869 			& 5		&	0.015	&	0.002 	&   8	&	0.098\\	
PEGASE 9,241 			& 6		&	0.063	&	0.003 	&   9	&	0.133\\	
%PEGASE 13,659 			& 8		&	0.116 	&	0.017 	& 	--	&	--\\
\bottomrule
\end{tabular}
}
\label{Tab:Iterations1}
\end{center}
\end{table}

To examine performance in more heavily loaded networks, each case was loaded along the base case direction 90\% of the way to the power flow insolvability boundary, as determined by continuation power flow \texttt{cpf} in {MATPOWER}. The previous experiments were repeated, and the results are shown in columns four and five of Table \ref{Tab:Iterations1}. As would be expected, convergence to the power flow solution now takes more iterations, and the approximate solution \eqref{Eq:LinearizedFPPF} yields less accurate predictions; mean error $\delta_{\rm avg}$ is smaller, but omitted.

For base case loading, the voltage profile results for the 39 bus system are plotted in Figure \ref{Fig:NE39}. As stated, the profile obtained by iterating the FPPF (blue crosses) coincides with the solution determined by MATPOWER (black circles). The approximate solution \eqref{Eq:LinearizedFPPF} is plotted in red. The approximate solution is quite accurate, but systematically overestimates the voltage values; this behaviour is consistent across all test cases.

\begin{figure}[ht!]
\begin{center}
\includegraphics[width=0.9\columnwidth]{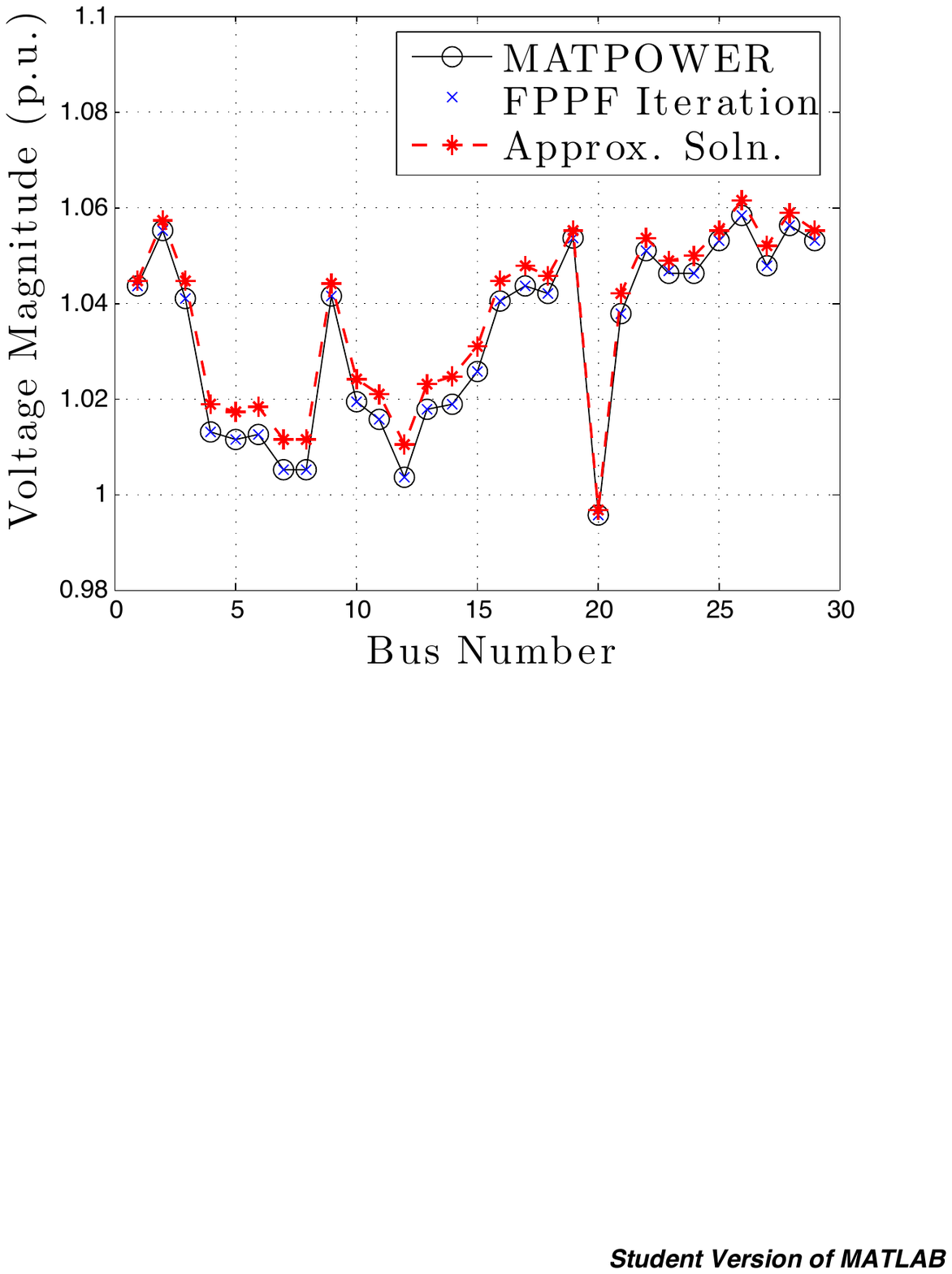}
\caption{Calculated voltage profiles for 39 bus system.}
\label{Fig:NE39}
\end{center}
\end{figure}

{\tb
For the base case IEEE 300 bus test system, Figure \ref{Fig:Compare1} compares the convergence of the FPPF to the stock implementations of Newton-Raphson (NR) and Fast-Decoupled Load Flow (FDLF) from MATPOWER. All three solvers were initialized from a flat start; the vertical axis is the $2$-norm of the difference between current iterate of voltage magnitudes and the exact voltage magnitude solution vector. Both the FPPF and the FDLF show linear convergence (with slightly different rates) while the NR iterates converge quadratically. Figure \ref{Fig:Compare2} repeats the comparison for heavy loading. In this case the NR requires only one additional iteration to reach machine precision, while the FPPF and FDLF iteration counts double; this is consistent across all cases. We conclude that the FPPF algorithm convergence is similar or slightly favourable compared to FDLF, but does not approach that of NR.

\begin{figure}[ht!]
\begin{center}
\includegraphics[width=0.9\columnwidth]{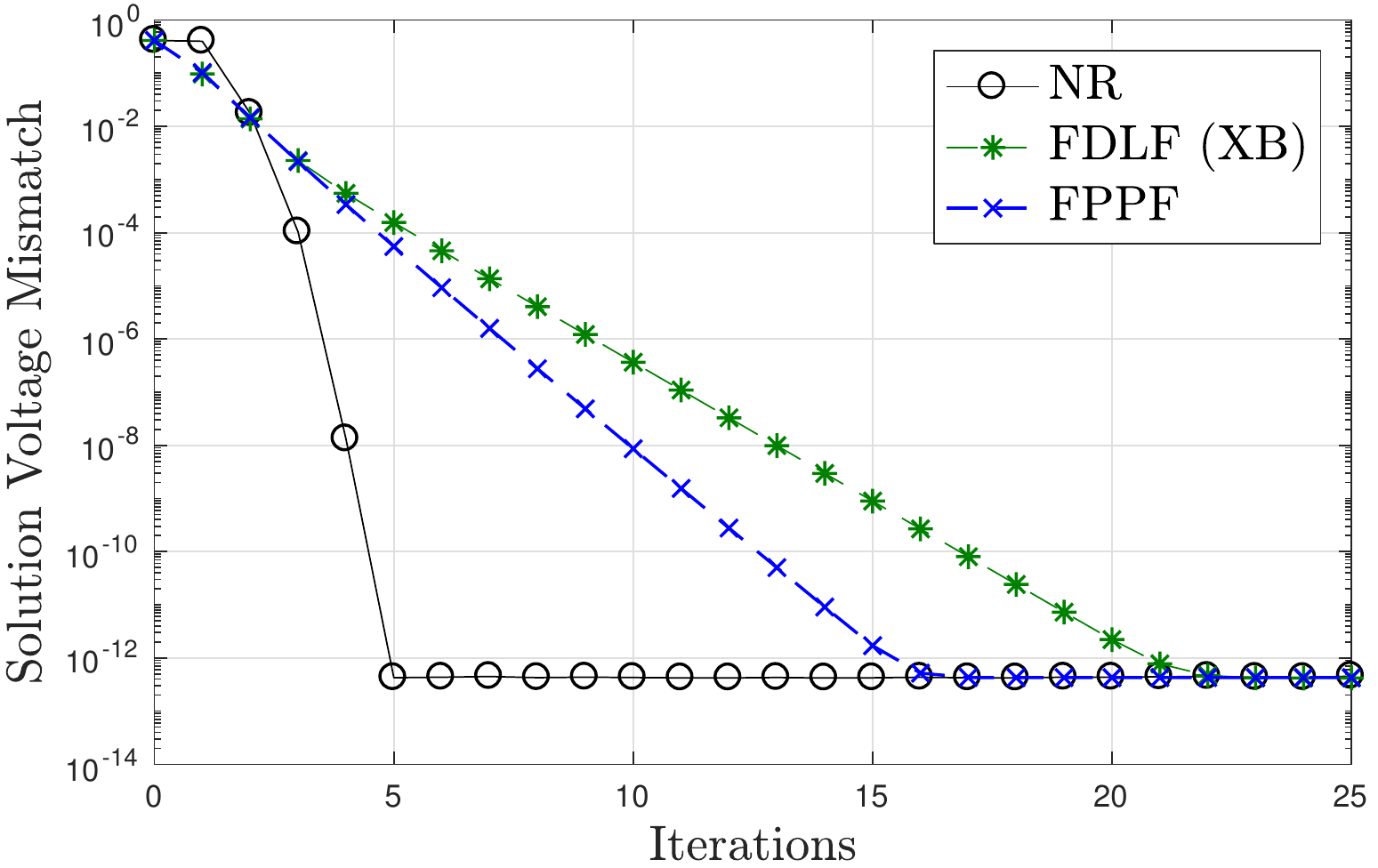}
\caption{Convergence comparison for NR, FDLF, and FPPF solvers for base case loading of the IEEE 300 bus system.}
\label{Fig:Compare1}
\end{center}
\end{figure}

\begin{figure}[ht!]
\begin{center}
\includegraphics[width=0.9\columnwidth]{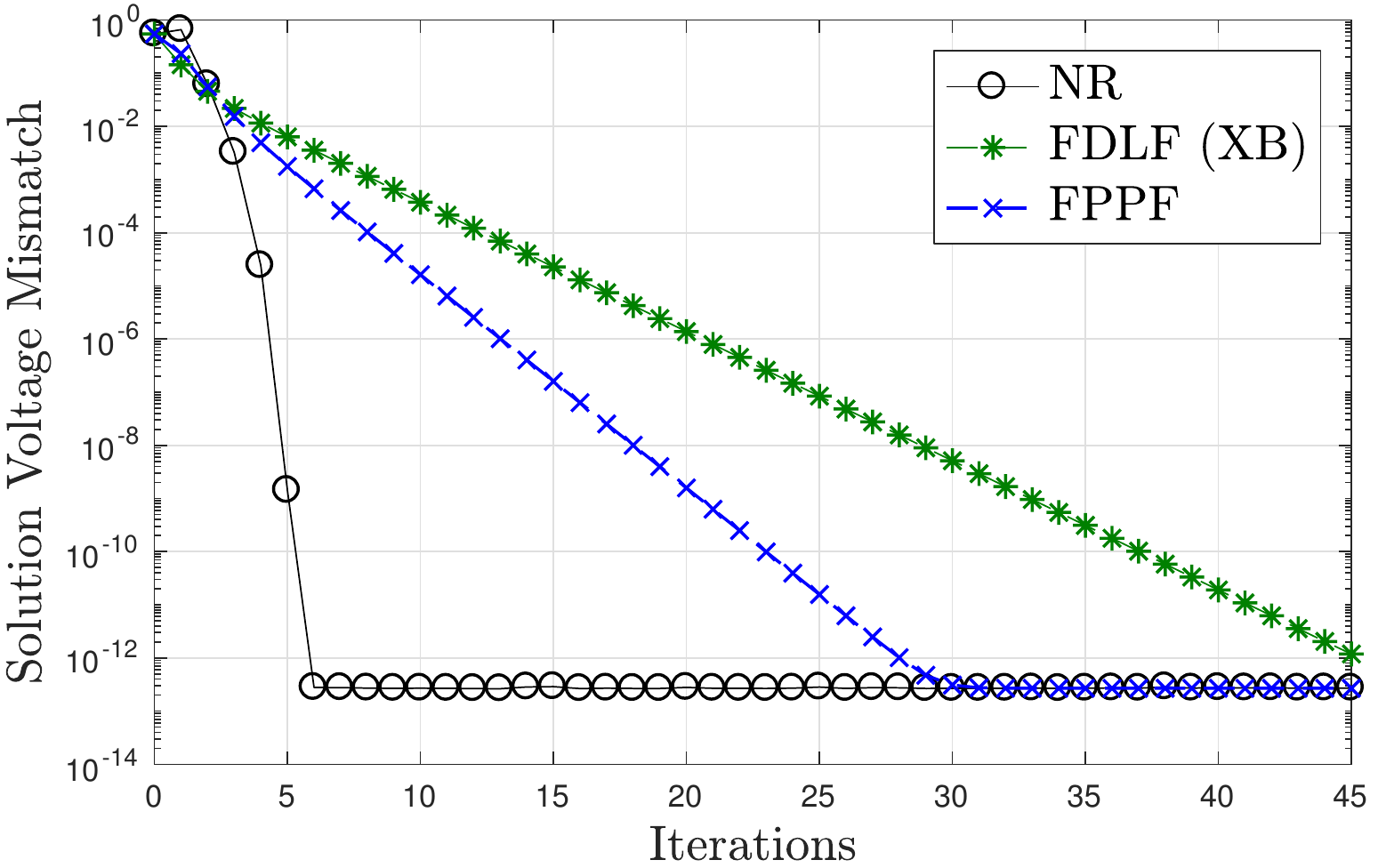}
\caption{Convergence comparison for NR, FDLF, and FPPF solvers for heavy loading of the IEEE 300 bus system.}
\label{Fig:Compare2}
\end{center}
\end{figure}
}

{\tb The linear convergence of the FPPF is consistent with our results in Part II, where we will show \textemdash{} for a subclass of radial networks \textemdash{} that the FPPF is a contraction mapping whenever there exists a high-voltage solution. Those theoretical results do not generalize to the cases considered numerically here. However, they do suggest that contractivity of the FPPF and the existence of a high-voltage solution will go hand-in-hand.
}

{\tb As a final test, we examine the sensitivity of Algorithm \ref{Alg:FPPF} to initialization. If the FPPF is a contraction mapping on a large subset of voltage-space, we would expect the convergence of Algorithm \ref{Alg:FPPF} to be insensitive to the choice of initialization. We consider the IEEE 118 bus test system at base case loading. We fix the angle initial condition at $\theta = \vzeros[n+m]$, and generate 1000 random voltage magnitude initial conditions, each with components pulled uniformly from the interval $[1-\alpha,1+\alpha]$ for various values of $\alpha$. For each initial condition, we run the NR, FDLF, and FPPF algorithms. If the iterates converge to the known high-voltage solution, we mark the test successful; otherwise, we say the solver has failed. Table \ref{Tab:Initial} shows the fraction of successful tests for various values of $\alpha$.
}

\begin{table}[h!]
\begin{center}
\caption{Solver success rates under random initializations for the IEEE 118 bus system.}
{\renewcommand{\arraystretch}{1}
%\begin{tabular}{l|cc|cc}
\begin{tabular}{|l|c|c|c|}
\toprule
%& \multicolumn{2}{*}{Base Load} & \multicolumn{2}{*}{High Load}\\
%& \multicolumn{2}{c}{\midrule} & \multicolumn{2}{c}{\midrule}\\
%\cline{2-3} \cline{4-5}
IC Spread ($\alpha$) & NR & FDLF & FPPF\\
\midrule
0.05 & 0.98 & 0.98 & 1.00\\
0.10 & 0.53 & 0.53 & 1.00\\
0.15 & 0.18 & 0.18 & 1.00\\
0.2 & 0.03 & 0.03 & 1.00\\
0.3 & 0.00 & 0.00 & 1.00\\
0.5 & 0.00 & 0.00 & 1.00\\
0.7 & 0.00 & 0.00 & 0.99\\
0.9 & 0.00 & 0.00 & 0.99\\
\bottomrule
\end{tabular}
}
\label{Tab:Initial}
\end{center}
\end{table}

{\tb For very small values of $\alpha$ (i.e., initial conditions close to a flat start) all three solvers behave similarly. As $\alpha$ increases, both the NR and FDLF solvers increasingly struggle to calculate the high-voltage solution; for some initializations these solvers converge to a low-voltage solution, for other initializations they diverge. On the larger test systems, the NR and FDLF are even more sensitive than suggested by Table \ref{Tab:Initial}. The results show that for NR and FDLF, even one mildly perturbed component of an otherwise tolerable initial condition causes failure to converge to the high-voltage solution,
In contrast, the FPPF iteration recovers the high-voltage solution from nearly every constructed initial condition.}

% -------------------------------------------------------------------------------------- %
% -------------------------------------------------------------------------------------- %
\section{Conclusions}
\label{Sec:Conclusions}
% -------------------------------------------------------------------------------------- %
% -------------------------------------------------------------------------------------- %

We have developed a new formulation of lossless power flow equations, which we term the fixed-point power flow. The model is naturally parameterized in terms of the power network stiffness matrices, which concisely encode the topology and parameters of the network, and leads immediately to an approximate solution of the power flow equations. We then proposed an algorithm for solving the power flow equations based on the FPPF. {\tb Numerical testing shows that this algorithm converges quickly for base case and stressed conditions. The convergence is linear, at a rate comparable or favorable compared to fast-decoupled methods, and is very insensitive to initialization.}

In Part II \cite{JWSP:17b} of this paper, we restrict ourselves to the case of radial networks, and leverage the fixed-point power flow to derive sufficient and tight conditions for the existence and uniqueness of a stable power flow solution. The analysis presented there will also provide guarantees for the convergence of the FPPF iteration $v_{k+1} = f_{\rm radial}(v_k)$ for a subclass of radial networks. Future directions and open problems are deferred to the conclusions of Part II.

% Note that keywords are not normally used for peerreview papers.

% For peer review papers, you can put extra information on the cover
% page as needed:
% \ifCLASSOPTIONpeerreview
% \begin{center} \bfseries EDICS Category: 3-BBND \end{center}
% \fi
%
% For peerreview papers, this IEEEtran command inserts a page break and
% creates the second title. It will be ignored for other modes.
\IEEEpeerreviewmaketitle

\ifCLASSOPTIONcaptionsoff
  \newpage
\fi

% trigger a \newpage just before the given reference
% number - used to balance the columns on the last page
% adjust value as needed - may need to be readjusted if
% the document is modified later
%\IEEEtriggeratref{8}
% The "triggered" command can be changed if desired:
%\IEEEtriggercmd{\enlargethispage{-5in}}

\bibliographystyle{IEEEtran}
\bibliography{alias,Main,JWSP,New}

\appendices

\section{Supporting Results and Proofs}

\begin{pfof}{Proposition \ref{Prop:VLstar}}
That the definition \eqref{Eq:VLstar} is well-posed follows from Assumption \ref{Ass:Matrix}. From Assumption \ref{Ass:Matrix} and Fact \ref{Fact:Susceptance}, we conclude that $-B_{LL}$ is a symmetric positive definite $M$-matrix.\footnote{A matrix $A \in \real^{n \times n}$ is a \emph{$Z$-matrix} if $A_{ij} \leq 0$ for all $i\neq j$. A $Z$-matrix $A$ is a nonsingular \emph{$M$-matrix} if it can be expressed as $A = sI_n - B$, where $B \in \real^{n \times n}$ has nonnegative elements and $s > \rho(B)$, where $\rho(B)$ is the spectral radius of $B$ \cite[Chapter 6]{AB-RJP:94}. If $A$ is a nonsingular $M$-matrix, then the elements of $A^{-1}$ are nonnegative \cite[Chapter 6, Theorem 2.3, $\mathbf{N}_{38}$]{AB-RJP:94}.} Hence $-B_{LL}^{-1}$ is nonnegative. The submatrix $B_{LG}$ is nonnegative by Fact \ref{Fact:Susceptance}, and $V_G$ is strictly positive. Since the network is connected, $B_{LG}$ contains at least one non-zero positive element in each row and column, and it follows that $V_L^*$ is strictly positive. 
Now suppose that $P = \vzeros[m+m]$ in \eqref{Eq:Active} and $Q_L = \vzeros[n]$ in \eqref{Eq:Reactive}. Using Lemma \ref{Eq:DRPFE3}, substituting $\theta = \vzeros[n+m]$ into \eqref{Eq:AltRPFE}, and applying Lemma \ref{Eq:DRPFE3}(iii), it follows that $(\theta,V_L) = (\vzeros[n+m],V_L^*)$ is a solution of \eqref{Eq:Reactive}. Similarly, substituting $\theta = \vzeros[n+m]$ into \eqref{Eq:Active} shows that $(\vzeros[n+m],V_L^*)$ also solves \eqref{Eq:Active}. This completes the proof.
\end{pfof}

% The \emph{spectral radius} $\rho(A)$ of a real-valued matrix $A \in \real^{n \times n}$ is $\rho(A) = \max\setdef{|\lambda|_{\mathbb{C}}}{\det\left(\lambda I_{n} - A\right) = 0}$, where $|x|_{\mathbb{C}}$ is the magnitude of $x \in \mathbb{C}$. 
%

%

%-------------------------------------------------------
%-------------------------------------------------------

%--------------------------------------------------------------
%--------------------------------------------------------------
%--------------------------------------------------------------

%--------------------------------------------------------------
%--------------------------------------------------------------
%--------------------------------------------------------------

\medskip

\begin{lemma}[\bf Partitioned Incidence Matrix II]\label{Lem:IncidenceMatrixProperties2}
Consider the incidence matrix \eqref{Eq:Incidence} along with its plus/minus decomposition \eqref{Eq:IncidencePlusMinus}. For $v \in \real_{>0}^n$, let $v^{-1} = (v_1^{-1},\ldots,v_{n}^{-1})^{\sf T}$. The following identities hold:
\begin{enumerate}
\item[(i)] $[A_L^{g\ell}(-)^{\sf T}v]^{-1}\vones[g\ell] = A_L^{g\ell}(-)^{\sf T}v^{-1} = (|A_L^{g\ell}|)^{\sf T}v^{-1}$
\item[(ii)] $[A_L^{\ell\ell}(-)^{\sf T}v]^{-1}\vones[\ell\ell] = A_L^{\ell\ell}(-)^{\sf T}v^{-1}$
\item[(iii)] $[A_L^{\ell\ell}(+)^{\sf T}v]^{-1}\vones[\ell\ell] = A_L^{\ell\ell}(+)^{\sf T}v^{-1}$
\item[(iv)] $[v]^{-1}A_L^{\ell\ell}(+) = A_L^{\ell\ell}(+)[A_L^{\ell\ell}(+)^{\sf T}v^{-1}]$
\item[(v)] $[v]^{-1}A_L^{\ell\ell}(-) = A_L^{\ell\ell}(-)[A_L^{\ell\ell}(-)^{\sf T}v^{-1}]$
\item[(vi)] $[v]^{-1}A_L^{g\ell}(-) = A_L^{g\ell}(-)[A_L^{g\ell}(-)^{\sf T}v^{-1}]$\,.
\end{enumerate}
\end{lemma}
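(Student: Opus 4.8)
The plan is to reduce all six identities to a single structural observation about the blocks appearing in the plus/minus decomposition \eqref{Eq:IncidencePlusMinus}. First I would note that, because of the edge-orientation conventions fixed in Section \ref{Sec:NetworkBranchModel}, each of the three matrices $A_L^{\ell\ell}(+)$, $A_L^{\ell\ell}(-)$, and $A_L^{g\ell}(-)$ is a $\{0,1\}$-matrix having exactly one $+1$ in each column. Indeed, every $\ell\ell$ edge has both endpoints in $\mathcal{N}_L$, so its source contributes a single $+1$ to the corresponding column of $A_L^{\ell\ell}(+)$ and its sink a single $+1$ to $A_L^{\ell\ell}(-)$; every $g\ell$ edge is oriented from a generator to a load, so its source does not appear in the load block (whence $A_L^{g\ell}(+)=\vzeros[]$) while its sink contributes a single $+1$ to $A_L^{g\ell}(-)$. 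Writing $M$ for any such ``selection'' matrix and letting $r(e)$ denote the unique row with $M_{r(e),e}=1$, the action on a vector is simply $(M^{\sf T}x)_e = x_{r(e)}$.

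With this observation, identities (i)--(iii) are immediate: for a selection matrix $M$, the $e$th component of $[M^{\sf T}v]^{-1}\vones[]$ is $1/v_{r(e)} = (v^{-1})_{r(e)} = (M^{\sf T}v^{-1})_e$, so $[M^{\sf T}v]^{-1}\vones[] = M^{\sf T}v^{-1}$; applying this with $M = A_L^{g\ell}(-)$, then $A_L^{\ell\ell}(-)$, then $A_L^{\ell\ell}(+)$ yields (i), (ii), and (iii). The extra equality in (i) follows separately from the orientation convention: since all nonzero entries of $A_L^{g\ell}$ equal $-1$ and $A_L^{g\ell}(+)=\vzeros[]$, we have $A_L^{g\ell} = -A_L^{g\ell}(-)$ and hence $|A_L^{g\ell}| = A_L^{g\ell}(-)$.

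For the commutation identities (iv)--(vi), I would verify the matrix equality $[v]^{-1}M = M[M^{\sf T}v^{-1}]$ entrywise for a selection matrix $M$. The $(i,e)$ entry of the left-hand side is $v_i^{-1}M_{ie}$, which equals $v_{r(e)}^{-1}$ when $i=r(e)$ and vanishes otherwise; the $(i,e)$ entry of the right-hand side is $M_{ie}(M^{\sf T}v^{-1})_e = M_{ie}(v^{-1})_{r(e)}$, which likewise equals $v_{r(e)}^{-1}$ when $i=r(e)$ and vanishes otherwise. The two sides therefore agree, and specializing $M$ to $A_L^{\ell\ell}(+)$, $A_L^{\ell\ell}(-)$, and $A_L^{g\ell}(-)$ delivers (iv), (v), and (vi).

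The calculation is entirely routine once the selection-matrix structure is in place; the only point requiring genuine care --- the ``hard part,'' such as it is --- is the bookkeeping that establishes this structure, in particular correctly tracking how the generator-to-load orientation convention forces $A_L^{g\ell}(+)=\vzeros[]$ and leaves $A_L^{g\ell}(-)$ with a single $+1$ per column. Everything else is a component-wise comparison.
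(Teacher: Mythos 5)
Your proof is correct and follows essentially the same route as the paper's: both rest on the observation that $A_L^{\ell\ell}(+)$, $A_L^{\ell\ell}(-)$, and $A_L^{g\ell}(-)$ are $\{0,1\}$-matrices with exactly one unit entry per column (a consequence of the generator-to-load orientation convention), followed by a component-wise verification. The paper writes out only part (i) explicitly, via the identity $[A_L^{g\ell}(-)^{\sf T}v][A_L^{g\ell}(-)^{\sf T}v^{-1}] = I_{g\ell}$, and declares the remaining parts analogous, so your unified selection-matrix treatment is, if anything, slightly more complete.
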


\begin{proof}.
%Omitted.
{\tb 
(i) By construction $A_{L}^{g\ell}(-) \in \{0,1\}^{n \times |\Egl|}$ has exactly one element equal to one in each column; if the column corresponds to branch $(i,j)\in \Egl$, then the non-zero element is in row $j$. Thus, $A_{L}^{g\ell}(-)^{\sf T}v^{-1}$ produces a branch vector with entry $v_j^{-1}$ in the entry corresponding to branch $(i,j)\in\Egl$. It follows that $[A_L^{g\ell}(-)^{\sf T}v][A_L^{g\ell}(-)^{\sf T}v^{-1}] = I_{g\ell}$, and the result follows by right-multiplying by $\vones[\ell\ell]$ and rearranging. The third equality is trivial as $A_{L}^{g\ell}(-) = |A|_L^{g\ell}$ by definition. The proofs of (ii)-(v) are analogous.
}
\end{proof}

%-------------------------------------------------------------------------
%-------------------------------------------------------------------------
%-------------------------------------------------------------------------

\medskip

\begin{lemma}[\bf Identities]\label{Lem:Identities}
The following identities hold:
\begin{enumerate}[(i)]\setlength{\itemsep}{5pt}
\item \label{Lem:Identity1}
$
\!
\begin{aligned}[t]
 L_1(v) 
    &\triangleq [v]^{-1}|A|_L^{\ell\ell}[h_{\ell\ell}(v)]\\
    &= A_L^{\ell\ell}(+)[A_L^{\ell\ell}(-)^{\sf T}v] + A_L^{\ell\ell}(-) [A_L^{\ell\ell}(+)^{\sf T}v]\,,
\end{aligned}
$ 
\item \label{Lem:Identity2} $L_2(v) \triangleq [v]^{-1}|A|_L^{g\ell}[h_{g\ell}(v)] = |A|_L^{g\ell}$\,.
\end{enumerate}
\end{lemma}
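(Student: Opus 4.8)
The plan is to verify both identities by commuting the diagonal factor $[v]^{-1}$ past the (unoriented) incidence matrices using Lemma \ref{Lem:IncidenceMatrixProperties2}, and then collapsing the resulting products of diagonal matrices branch-by-branch.

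For (i), I would first split the unoriented incidence matrix via its plus/minus decomposition \eqref{Eq:IncidencePlusMinus}, writing $|A|_L^{\ell\ell} = A_L^{\ell\ell}(+) + A_L^{\ell\ell}(-)$, so that
\begin{equation*}
L_1(v) = [v]^{-1}A_L^{\ell\ell}(+)[h_{\ell\ell}(v)] + [v]^{-1}A_L^{\ell\ell}(-)[h_{\ell\ell}(v)]\,.
\end{equation*}
Applying identities (iv) and (v) of Lemma \ref{Lem:IncidenceMatrixProperties2} moves $[v]^{-1}$ through each incidence matrix, converting it into a diagonal, branch-indexed factor. The key observation is then that these branch-indexed diagonal factors telescope against $[h_{\ell\ell}(v)]$: recalling from \eqref{Eq:VComponentForm} that the entry of $h_{\ell\ell}(v)$ on branch $(i,j)\in\Ell$ is $v_iv_j$, the product $[A_L^{\ell\ell}(+)^{\sf T}v^{-1}][h_{\ell\ell}(v)]$ has branch-$(i,j)$ entry $v_i^{-1}\cdot v_iv_j = v_j$, i.e.\ it equals $[A_L^{\ell\ell}(-)^{\sf T}v]$; symmetrically $[A_L^{\ell\ell}(-)^{\sf T}v^{-1}][h_{\ell\ell}(v)] = [A_L^{\ell\ell}(+)^{\sf T}v]$. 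Substituting these two identities yields exactly the claimed two-term expression for $L_1(v)$.

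For (ii), I would use the fact (established in Section \ref{Sec:NetworkBranchModel} and visible in \eqref{Eq:IncidencePlusMinus}) that every $g\ell$ branch is oriented from its generator to its load, so $A_L^{g\ell}(+) = \vzeros[]$ and hence $|A|_L^{g\ell} = A_L^{g\ell}(-)$; moreover $h_{g\ell}(v) = A_L^{g\ell}(-)^{\sf T}v$ from \eqref{Eq:VVectorForm}. Then $L_2(v) = [v]^{-1}A_L^{g\ell}(-)[A_L^{g\ell}(-)^{\sf T}v]$, and applying identity (vi) of Lemma \ref{Lem:IncidenceMatrixProperties2} gives $L_2(v) = A_L^{g\ell}(-)[A_L^{g\ell}(-)^{\sf T}v^{-1}][A_L^{g\ell}(-)^{\sf T}v]$. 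Since each $g\ell$ branch $(i,j)$ contributes the diagonal entry $v_j^{-1}\cdot v_j = 1$, the product of the two diagonal factors is the identity $I_{g\ell}$, and we recover $L_2(v) = A_L^{g\ell}(-) = |A|_L^{g\ell}$.

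I do not anticipate a genuine obstacle here: both identities reduce to elementary branch-wise cancellations once $[v]^{-1}$ has been commuted through the incidence matrices. The only point requiring care is bookkeeping --- keeping the source/sink (plus/minus) roles consistent so that the factor $v_i^{-1}$ (or $v_j^{-1}$) cancels the correct half of the product $v_iv_j$ appearing in $h_{\ell\ell}(v)$ --- which is precisely what identities (iv)--(vi) of Lemma \ref{Lem:IncidenceMatrixProperties2} were set up to handle.
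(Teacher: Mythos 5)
Your proposal is correct and follows essentially the same route as the paper's own proof: split $|A|_L^{\ell\ell}$ into its plus/minus parts, commute $[v]^{-1}$ through via Lemma \ref{Lem:IncidenceMatrixProperties2}\,(iv)--(vi), and cancel the diagonal branch factors (the paper does this by factoring $[h_{\ell\ell}(v)] = [A_L^{\ell\ell}(+)^{\sf T}v][A_L^{\ell\ell}(-)^{\sf T}v]$ and invoking parts (ii)--(iii) of that lemma, which is the same entry-wise telescoping you describe). No gaps.
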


\begin{proof}.
(i): Using the decomposition $|A|_L^{\ell\ell} = A_{L}^{\ell\ell}(+) + A_{L}^{\ell\ell}(-)$ from   \eqref{Eq:IncidenceAbs} and inserting the expression for $h_{\ell\ell}(v)$ from  \eqref{Eq:VVectorForm}, we find that
%\begin{align*}
%L_1(x) &= [r(x)]A_L^{\ell\ell}(+)\cdot\mathrm{diag}\left(A_L^{\ell\ell}(+)^{\sf T}(\vones[n]+x)\right)\cdot \mathrm{diag}\left(A_L^{\ell\ell}(-)^{\sf T}(\vones[n]+x)\right)\\
%&+ [r(x)]A_L^{\ell\ell}(-)\cdot\mathrm{diag}\left(A_L^{\ell\ell}(+)^{\sf T}(\vones[n]+x)\right)\cdot \mathrm{diag}\left(A_L^{\ell\ell}(-)^{\sf T}(\vones[n]+x)\right)
%%
%\end{align*}
\begin{align*}
L_1(v) &= [v]^{-1}A_L^{\ell\ell}(+)[A_L^{\ell\ell}(+)^{\sf T}v] [A_L^{\ell\ell}(-)^{\sf T}v]\\
&+ [v]^{-1}A_L^{\ell\ell}(-)[A_L^{\ell\ell}(+)^{\sf T}v] [A_L^{\ell\ell}(-)^{\sf T}v]
\end{align*}
Using Lemma \ref{Lem:IncidenceMatrixProperties2} (iv) and (v) to substitute for the first factor in each term, then reshuffling the diagonal matrices, we find that
%\begin{align*}
%L_1(x) &= A_L^{\ell\ell}(+)\cdot\underbrace{\mathrm{diag}\left(A_L^{\ell\ell}(+)^{\sf T}r(x)\right)\cdot\mathrm{diag}\left(A_L^{\ell\ell}(+)^{\sf T}(\vones[n]+x)\right)}_{=I_{|\mathcal{E}^{\ell\ell}|}\,\mathrm{by}\,\,\mathrm{Lemma}\,\,\ref{Lem:IncidenceMatrixProperties2}\,(iii)}\cdot\, \mathrm{diag}\left(A_L^{\ell\ell}(-)^{\sf T}(\vones[n]+x)\right)
%\\
%&+ A_L^{\ell\ell}(-)\cdot \underbrace{\mathrm{diag}\left(A_L^{\ell\ell}(-)^{\sf T}r(x)\right)\cdot\mathrm{diag}\left(A_L^{\ell\ell}(-)^{\sf T}(\vones[n]+x)\right)}_{=I_{|\mathcal{E}^{\ell\ell}|}\,\mathrm{by}\,\,\mathrm{Lemma}\,\,\ref{Lem:IncidenceMatrixProperties2}\,(ii)}\cdot\, \mathrm{diag}\left(A_L^{\ell\ell}(+)^{\sf T}(\vones[n]+x)\right)
%%
%\end{align*}
\begin{align*}
L_1(v) &= A_L^{\ell\ell}(+)\cdot\underbrace{[A_L^{\ell\ell}(+)^{\sf T}v^{-1}]\cdot[A_L^{\ell\ell}(+)^{\sf T}v]}_{=I_{\ell\ell}\,\mathrm{by}\,\,\mathrm{Lemma}\,\,\ref{Lem:IncidenceMatrixProperties2}\,\mathrm{(iii)}}\cdot[A_L^{\ell\ell}(-)^{\sf T}v]
\\
&+ A_L^{\ell\ell}(-)\cdot \underbrace{[A_L^{\ell\ell}(-)^{\sf T}v^{-1}]\cdot [A_L^{\ell\ell}(-)^{\sf T}v]}_{=I_{\ell\ell}\,\mathrm{by}\,\,\mathrm{Lemma}\,\,\ref{Lem:IncidenceMatrixProperties2}\,\mathrm{(ii)}}\cdot\, [A_L^{\ell\ell}(+)^{\sf T}v]
\end{align*}
from which the result follows.

(ii): Similar to (i), first substitute for $|A|_{L}^{g\ell}$ from \eqref{Eq:IncidenceAbs} and for $h_{g\ell}(v)$ from \eqref{Eq:VVectorForm} to find
$$
L_2(v) = [v]^{-1}A_L^{g\ell}(-)[A_L^{g\ell}(-)^{\sf T}v]\,.
$$
Applying Lemma \ref{Lem:Identities} (vi) to the first two terms in the product, we obtain
\begin{align*}
L_2(v) &= A_L^{g\ell}(-)\underbrace{[A_L^{g\ell}(-)^{\sf T}v^{-1}][A_L^{g\ell}(-)^{\sf T}v]}_{{=I_{g\ell}\,\mathrm{by}\,\,\mathrm{Lemma}\,\,\ref{Lem:IncidenceMatrixProperties2}\,\mathrm{(i)}}} = |A|_L^{g\ell}\,,
\end{align*}
which completes the proof.
\end{proof}

%--------------------------------------------------------------
%--------------------------------------------------------------
%--------------------------------------------------------------

\medskip

\begin{lemma}\label{Lem:EquivFormulas}\textbf{(Alternate Expressions for the Reactive Power Flow Equation):}
The reactive power flow \eqref{Eq:Reactive} can be written in vector form as
\begin{equation}\label{Eq:AltRPFE}
\begin{aligned}
Q_L &= -[V_L][B_{ii}]_{i\in\mathcal{N}_L}V_L\\ &\quad - |A|_L[V_iV_jB_{ij}]_{(i,j)\in\mathcal{E}}\,\bcos(A^{\sf T}\theta)\,.
\end{aligned}
\end{equation}
Moreover, the following expressions are all equal:
\begin{enumerate}
\item[(i)]\label{Eq:DRPFE1} $-[V_L][B_{ii}]_{i\in\mathcal{N}_L}V_L - |A|_L[V_iV_jB_{ij}]_{(i,j)\in\mathcal{E}}\vones[|\mathcal{E}|]$\,,
\item[(ii)]\label{Eq:DRPFE2} $-[V_L]\left(B_{LL} V_L +B_{LG} V_G\right)$\,,
\item[(iii)]\label{Eq:DRPFE3} $-[V_L]B_{LL}\left(V_L-V_L^*\right)$\,.
\end{enumerate}
\end{lemma}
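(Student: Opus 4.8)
The plan is to treat the two assertions of the lemma separately: first derive the vector form \eqref{Eq:AltRPFE} of the scalar equation \eqref{Eq:Reactive}, and then establish the chain of equalities (i)$\,=\,$(ii)$\,=\,$(iii), which are purely voltage-dependent identities.

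For the vector form, I would begin from \eqref{Eq:Reactive} and split the inner sum at the diagonal index $j=i$. There $\cos(\theta_i-\theta_i)=1$, so the diagonal contributes $-V_i^2B_{ii}$, which assembles over $i\in\mathcal{N}_L$ into $-[V_L][B_{ii}]_{i\in\mathcal{N}_L}V_L$. For the off-diagonal part, Fact \ref{Fact:Susceptance} gives $B_{ij}\neq 0$ precisely when $i,j$ are joined by a branch, so the sum over $j\neq i$ collapses to a sum over branches incident to node $i$. Since $\cos(\cdot)$ is even, each summand $V_iV_jB_{ij}\cos(\theta_i-\theta_j)$ depends only on the unoriented branch $\{i,j\}$, not on its assigned direction; this even symmetry, together with the fact that $B_{ji}=B_{ij}$, is exactly what makes the single branch vector $[V_iV_jB_{ij}]_{(i,j)\in\mathcal{E}}\bcos(A^{\sf T}\theta)$ correct at both endpoints regardless of orientation. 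Reading off the $i$-th entry of $|A|_L[V_iV_jB_{ij}]_{(i,j)\in\mathcal{E}}\bcos(A^{\sf T}\theta)$ and matching it term-by-term against $\sum_{j\neq i}V_iV_jB_{ij}\cos(\theta_i-\theta_j)$ then yields \eqref{Eq:AltRPFE}.

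For the equalities, (ii)$\,=\,$(iii) is immediate: by Definition \ref{Def:VLstar}, $B_{LL}V_L^* = -B_{LG}V_G$, so $-[V_L]B_{LL}(V_L-V_L^*) = -[V_L](B_{LL}V_L + B_{LG}V_G)$. The equality (i)$\,=\,$(ii) reuses the incidence bookkeeping from the first step: setting $\bcos(A^{\sf T}\theta)=\vones[|\mathcal{E}|]$ shows that the $i$-th entry of $|A|_L[V_iV_jB_{ij}]_{(i,j)\in\mathcal{E}}\vones[|\mathcal{E}|]$ equals $V_i\sum_{j\neq i}V_jB_{ij}$. Adding the diagonal term $-V_i^2B_{ii}$ recombines the full $i$-th row of $B$ into $-V_i\sum_{j=1}^{n+m}B_{ij}V_j$, whose vectorization over the load rows is $-[V_L](B_{LL}V_L + B_{LG}V_G)$, i.e.\ expression (ii).

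The individual substitutions are routine, and the one step deserving care is the claim that left-multiplication by the unoriented incidence matrix $|A|_L$ faithfully reproduces the off-diagonal nodal sums $\sum_{j\neq i}$. I expect this incidence accounting — verifying that each branch contributes to exactly the two rows of its endpoints, with the even symmetry of $\cos(\cdot)$ neutralizing the arbitrary edge orientation — to be the only genuinely substantive point; everything else is algebraic rearrangement driven by Fact \ref{Fact:Susceptance} and Definition \ref{Def:VLstar}.
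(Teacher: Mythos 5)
Your proposal is correct and follows essentially the same route as the paper's proof: verify the vector form \eqref{Eq:AltRPFE} by matching its $i$-th component against \eqref{Eq:Reactive} (diagonal term plus incident-branch sum, using $B_{ij}=B_{ji}$ and the evenness of $\cos$), obtain (i) by setting $\bcos(A^{\sf T}\theta)=\vones[|\mathcal{E}|]$, get (i)$\,=\,$(ii) by recombining the row sums of $B$, and get (ii)$\,=\,$(iii) directly from Definition \ref{Def:VLstar}. The only difference is that you spell out the unoriented-incidence bookkeeping in more detail than the paper, which states it tersely.
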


\begin{proof}.
%Omitted.
{\tb
For $i \in \mathcal{N}_L$, the $i$th component of \eqref{Eq:AltRPFE} is given by
\begin{align*}
Q_i &= -V_i^2B_{ii} - \sum_{\substack{(i,j)\in\mathcal{E} \\ (j,i)\in\mathcal{E}}} B_{ij}V_iV_j\cos(\theta_i-\theta_j)\\ &= -\sum_{j=1}^{n+m}\nolimits B_{ij}V_iV_j\cos(\theta_i-\theta_j)\,,
\end{align*}
which is exactly \eqref{Eq:Reactive}. To obtain (i), simply set $A^{\sf T}\theta = \vzeros[|\mathcal{E}|]$ in \eqref{Eq:AltRPFE}. The equivalence of (i) and (ii) follows most easily by observing that both are equivalent to \eqref{Eq:Reactive} by setting $\cos(\theta_i-\theta_j) = 1$ in \eqref{Eq:Reactive} and separating the sum in the appropriate fashion to obtain (i) or (ii). The equivalence of (ii) and (iii) is immediate by invoking Assumption \ref{Ass:Matrix} and inserting \eqref{Eq:VLstar}.
}
\end{proof}

\begin{pfof}{Corollary \ref{Cor:FPPFAcyclicAlt}}
Let $k(v)$ denote the final term in \eqref{Eq:f}. Expanding $k(v)$ by substituting the block incidence matrix \eqref{Eq:Incidence}, the block $\mathsf{D}$ matrix \eqref{Eq:DMatrix2}, and the partitioned $h(v)$ from \eqref{Eq:VVectorForm}, we obtain
\begin{align*}
k(v) &= \frac{1}{4}\mathsf{S}^{-1}\underbrace{[v]^{-1}|A|_L^{\ell\ell} [h_{\ell\ell}(v)]}_{\triangleq L_1(v)}\mathsf{D}_{\ell\ell}u_{\ell\ell}(v)\\
&\quad + \frac{1}{4}\mathsf{S}^{-1}\underbrace{[v]^{-1}|A|_L^{g\ell} [h_{g\ell}(v)]}_{\triangleq L_2(v)}\mathsf{D}_{g\ell}u_{g\ell}(v)\,,
\end{align*}
where we have rearranged some diagonal matrices, and where by combining \eqref{Eq:ux} and \eqref{Eq:psix}, 
\begin{subequations}
\begin{align*}
%\label{Eq:ull}
u_{\ell\ell}(v) &= \vones[\ell\ell] - \sqrt{\vones[\ell\ell] - [h_{\ell\ell}(v)]^{-2}\mathsf{D}_{\ell\ell}^{-2}[p_{\ell\ell}]p_{\ell\ell}}\,,\\
%\label{Eq:ugl}
u_{g\ell}(v) &= \vones[g\ell] - \sqrt{\vones[g\ell] - [h_{g\ell}(v)]^{-2}\mathsf{D}_{g\ell}^{-2}[p_{g\ell}]p_{g\ell}}\,.
\end{align*}
\end{subequations}
Lemma \ref{Lem:Identities}(\ref{Lem:Identity2}) shows that $L_2(v) = |A|_L^{g\ell}$ independent of $v$, so the previous simplifies to 
\begin{align*}
k(v) &= \frac{1}{4}\mathsf{S}^{-1}L_1(v)\mathsf{D}_{\ell\ell}u_{\ell\ell}(v) + \frac{1}{4}\mathsf{S}^{-1}|A|_L^{g\ell}\mathsf{D}_{g\ell}u_{g\ell}(v)\,.
\end{align*}
Applying Lemma \ref{Lem:Identities}(\ref{Lem:Identity1}) to explicitly expand $L_1(v)$ leads immediately to \eqref{Eq:FixedPoint}.
\end{pfof}

% biography section
% 
% If you have an EPS/PDF photo (graphicx package needed) extra braces are
% needed around the contents of the optional argument to biography to prevent
% the LaTeX parser from getting confused when it sees the complicated
% \includegraphics command within an optional argument. (You could create
% your own custom macro containing the \includegraphics command to make things
% simpler here.)
%\begin{IEEEbiography}[{\includegraphics[width=1in,height=1.25in,clip,keepaspectratio]{mshell}}]{Michael Shell}
% or if you just want to reserve a space for a photo:

\vspace{-2em}

\begin{IEEEbiography}[{\includegraphics[width=1in,height=1.25in,clip,keepaspectratio]{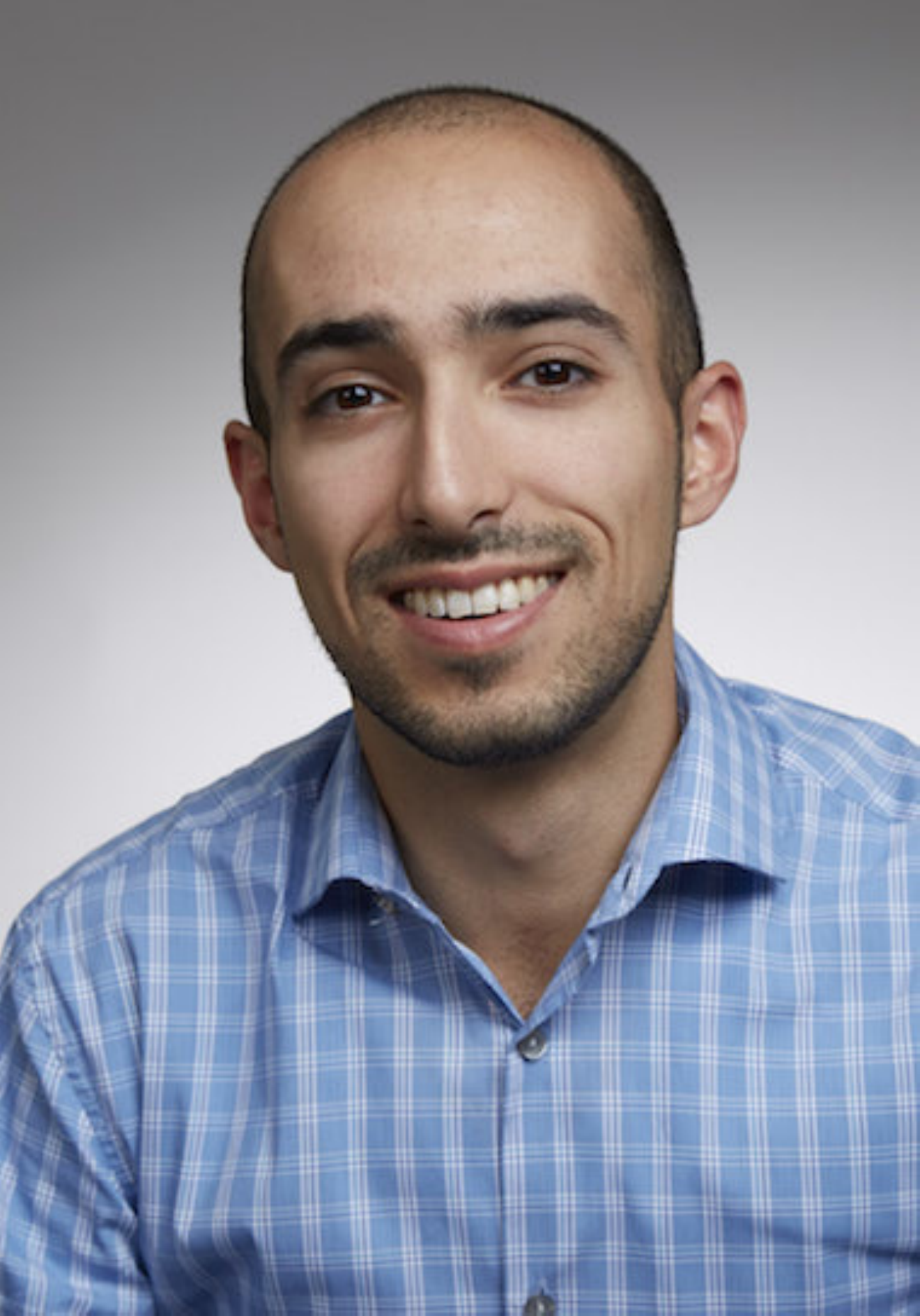}}]{John W. Simpson-Porco} (S'11--M'16) received the B.Sc. degree in engineering physics from Queen's University, Kingston, ON, Canada in 2010, and the Ph.D. degree in mechanical engineering from the University of California at Santa Barbara, Santa Barbara, CA, USA in 2015.

He is currently an Assistant Professor of Electrical and Computer Engineering at the University of Waterloo, Waterloo, ON, Canada. He was previously a visiting scientist with the Automatic Control Laboratory at ETH Z\"{u}rich, Z\"{u}rich, Switzerland. His research focuses on the control and optimization of multi-agent systems and networks, with applications in modernized power grids.

Prof. Simpson-Porco is a recipient of the 2012--2014 IFAC Automatica Prize and the Center for Control, Dynamical Systems and Computation Best Thesis Award and Outstanding Scholar Fellowship.
\end{IEEEbiography}
%
%\vspace{-2em}
%
%

% that's all folks
\end{document}